\numberwithin{equation}{section}
\def\sO{{\mathscr O}}
\newcommand{\CC}{\mathbb{C}}
\newcommand{\LL}{\mathbb{L}}
\newcommand{\PP}{\mathbb{P}}
\newcommand{\QQ}{\mathbb{Q}}
\newcommand{\cal}{\mathcal}
\def\cE{{\cal E}}
\def\cF{{\cal F}}
\def\cK{{\cal K}}
\def\cN{{\cal N}}
\def\fC{\mathfrak{C}}
\def\fE{\mathfrak{E}}
\def\fM{\mathfrak{M}}
\def\loc{\mathrm{loc}}
\let\fE=\cE
\def\and{\quad{\rm and}\quad}
\def\lra{\longrightarrow }
\def\mapright#1{\,\smash{\mathop{\lra}\limits^{#1}}\,}
\let\ti=\tilde
\let\sub=\subset
 \DeclareMathOperator{\Ext}{Ext}
\newtheorem{prop}{Proposition}[section]
\newtheorem{theo}[prop]{Theorem}
\newtheorem{lemm}[prop]{Lemma}
\newtheorem{rema}[prop]{Remark}
\newtheorem{exam}[prop]{Example}
\newtheorem{defi}[prop]{Definition}
\def\beq{\begin{equation}}
\def\eeq{\end{equation}}
\def\lsta{_\ast}
\def\upmo{^{-1}}
\def\sta{^\ast}
\def\dual{^{\vee}}
\def\Ob{{\mathcal Ob}}
\def\virt{^{\mathrm{vir}} }
\def\virtloc{\virt_\loc}
\def\bbL{\mathbb{L} }
\def\DM{Deligne-Mumford }
\def\defeq{{\,:=\,}}
\def\bbA{\mathbb{A} }
\def\zero{\mathrm{zero} }
\def\Po{\PP^1}
\def\redd{{\mathrm{red}}}
\def\TT{\LL\dual}
\def\ep{\epsilon}
\title[Cosection localized virtual cycles]{Torus localization and wall crossing for cosection localized virtual cycles}
\author{Huai-Liang Chang}
\address{Department of Mathematics, Hong Kong University of Science and Technology
Clear Water Bay, Kowloon, Hong Kong}
\email{mahlchang@ust.hk}
\author{Young-Hoon Kiem}
\address{Department of Mathematics and Research Institute
of Mathematics, Seoul National University, Seoul 151-747, Korea}
\email{kiem@math.snu.ac.kr}
\author[Jun Li]{Jun Li}
\address{Shanghai Center for Mathematical Sciences, Fudan University, China; \hfil\newline 
\indent Department of Mathematics, Stanford University,
USA} \email{jli@math.stanford.edu}
\thanks{  HLC was partially supported by Hong Kong GRF grant 600711; YHK was partially supported by Korea NRF grant 2011-0027969; JL was partially supported by  
NSF grant DMS-1104553 and DMS-1159156.}
\begin{document}
\maketitle

\section{Introduction}\label{S:Intro}
Since its introduction in 1995 by Li-Tian \cite{LiTi} and Behrend-Fantechi \cite{BeFa}, the theory of virtual fundamental classes has played a key role in algebraic geometry, defining important invariants such as the Gromov-Witten invariant and the Donaldson-Thomas invariant. Quite a few methods for handling the virtual fundamental classes were discovered such as torus localization (\cite{GrPa}), degeneration (\cite{LiD}), virtual pullback (\cite{Mano1}) and cosection localization (\cite{KiemLi}). Often combining these methods turns out to be quite effective.
The purpose of this paper is to prove \begin{enumerate}
\item virtual pullback formula,
\item torus localization formula and
\item wall crossing formula
\end{enumerate} 
for cosection localized virtual cycles. Our results can be thought of as generalizations of the corresponding results for the ordinary virtual cycles because when the cosection is trivial, these formulas coincide with those for the ordinary virtual cycles. For (2), we remove a technical assumption in \cite{GrPa} on the existence of an equivariant global embedding into a smooth \DM stack.

\medskip
A \DM stack $X$ is equipped with the intrinsic normal cone $\fC_X$ which is \'etale locally $[C_{U/V}/T_V|_U]$ if $U\to X$ is \'etale and $U\hookrightarrow V$ is an embedding into a smooth variety where $C_{U/V}$ is the normal cone of $U$ in $V$.
A perfect obstruction theory (\cite{BeFa}) gives us a vector bundle stack $\fE$ together with an embedding $\fC_X\subset \fE$. 
The virtual fundamental class (\cite{BeFa, LiTi}) is then defined by applying the Gysin map to the intrinsic normal cone
$$[X]\virt=0^!_\fE[\fC_X].$$
When there is a torus action on $X$ with respect to which the perfect obstruction theory is equivariant, the virtual fundamental class is 
localized to the fixed locus $F=X^{\CC^*}$ under suitable assumptions ({\cite{GrPa}):
\beq\label{1e1}
[X]\virt=\imath_*\frac{[F]\virt}{e(N\virt)}\in A_\ast^{\CC^*} X\otimes_{\QQ[t]}\QQ[t,t\upmo].
\eeq
Here $\iota: F\to X$ is the inclusion and $t$ is the generator of the equivariant ring of $\CC^*$.

\medskip
The construction of virtual fundamental class can be relativized for morphisms $f:X\to Y$ to give the virtual pullback
$$f^!:A_*(Y)\to A_*(X)$$
when the intrinsic normal cone $\fC_{X/Y}$ is embedded into a vector bundle stack $\fE$ on $X$. 
When the perfect obstruction theories of $X$ and $Y$ are compatible with $\fE$, the virtual pullback gives us the formula (\cite{Mano1}) 
\beq\label{1e2}
f^![Y]\virt=[X]\virt.
\eeq

A wall crossing formula (\cite{KL3}) compares $[M_+]\virt$ with $[M_-]\virt$ when $M_+$ and $M_-$ are open \DM substacks of the quotient $[X/\CC^*]$ of a \DM stack $X$ which are simple $\CC^*$-wall crossings. 

The cosection localization says that when there is an open $U\subset X$ and a surjective 
$\sigma:\fE|_U\to \CC_U$, we can define a 
cosection localized virtual fundamental class $$[X]\virtloc\in A_*(X(\sigma))\quad \text{where }X(\sigma)=X-U$$
which satisfies usual expected properties such as deformation invariance and 
$$\imath_*[X]\virtloc=[X]\virt\in A_*(X)\quad \text{where } \imath:X(\sigma)\hookrightarrow X.$$
The construction of $[X]\virtloc$ in (\cite{KiemLi}) is obtained in two steps: 
\begin{itemize}
\item (cone reduction) the intrinsic normal cone $\fC_{X}$ has support contained in $\fE(\sigma)$ where $\fE(\sigma)=\fE|_{X(\sigma)}\cup \ker(\fE|_U\to  \CC_U)$;
\item (localized Gysin map) there is a cosection localized Gysin map $$0_{\fE,\loc}^!:A_*(\fE(\sigma))\lra A_*(X(\sigma))$$ compatible with the usual Gysin map.
\end{itemize}
Then the cosection localized virtual fundamental class is defined as  $$[X]\virtloc=0^!_{\fE,\loc}[\fC_X].$$
The cosection localized virtual fundamental class turned out to be quite useful (\cite{Buss1, CK, CLL, Clad, GS1, GS2, HLQ, JT, KL1, KL2, KoTh, LQ, MPT, PP, PT}). 
For further applications, it seems desirable to have 
cosection localized analogues for torus localization formula, virtual pullback and wall crossing formulas.
For instance, recently there arose a tremendous interest in the Landau-Ginzburg theory whose key invariants such as the Fan-Jarvis-Ruan-Witten invariants are defined algebro-geometrically by cosection localized virtual cycles (\cite{CLL, CKL}). The formulas proved in this paper will be useful in the theory of MSP fields developed in \cite{CLLL} tod
study the Gromov-Witten invariants and the Fan-Jarvis-Ruan-Witten invariants of quintic Calabi-Yau threefolds.

\medskip
In \S\ref{S:Man}, we prove the cosection localized virtual pullback formulas (cf. Theorems \ref{1thmain1} and \ref{1thmain2}). The proofs in \cite{Mano1} work with necessary modifications as long as the rational equivalences used in the proofs lie in the suitable substacks for localized Gysin maps. In \S\ref{S:Torus}, we prove the torus localization formula for the cosection localized virtual fundamental classes (cf. Theorem \ref{2thmain}). In this new proof, we do not require (1) the existence of an equivariant global embedding of $X$ into a smooth \DM stack and (2) the existence of a global resolution of the perfect obstruction theory as in \cite{GrPa}. The technical condition (1) is completely gone while (2) is significantly weakened to ($2'$) the existence of a global resolution of the virtual normal bundle $N\virt$ only on the fixed locus $F$. Finally, in \S\ref{S:Wall}, we prove a wall crossing formula for cosection localized virtual fundamental classes. We remark that in \cite{KL2}, a degeneration formula for cosection localized virtual fundamental class was proved and it was effectively used to prove Maulik-Pandharipande's formulas for Gromov-Witten invariants of spin surfaces. 

All schemes or \DM stacks in this paper are defined over the complex number field $\CC$.


\section{Virtual pullback for cosection localized virtual cycles}\label{S:Man}
In this section, we show that Manolache's virtual pullback formula (\cite{Mano1}) holds for cosection localized virtual cycles (cf. Theorems \ref{1thmain1} and \ref{1thmain2}). 

\def\cS{\mathcal{S} }
\subsection{Virtual pullback of cosection localized virtual cycle}\label{ss2.1}
Let $f:X\to Y$ be a morphism of \DM stacks. 
Let  $\phi_X:E_{X}\to \bbL_{X}$ and $\phi_Y:E_{Y }\to \bbL_{Y }$ be (relative) perfect obstruction theories that fit into a commutative diagram of distinguished triangles
\beq\label{16}\xymatrix{
f^*E_{Y }\ar[r]^\varphi\ar[d]_{f^*\phi_Y} & E_{X } \ar[r]\ar[d]_{\phi_X} & E_{X/Y}\ar[r]\ar[d]^{\phi_{X/Y}} & \\ 
f^*\bbL_{Y }\ar[r] & \bbL_{X }\ar[r] & \bbL_{X/Y}\ar[r] &
}\eeq
\begin{defi}\label{11}
We say $f:X\to Y$ is \emph{virtually smooth} if $E_{X/Y}$ is perfect of amplitude  $[-1,0]$.
\end{defi}

By \cite[\S3.2]{Mano1},
if $f$ is virtually smooth, then $\phi_{X/Y}$ is a perfect obstruction theory.
In the remainder of this section, we assume $f:X\to Y$ is virtually smooth. 
By \cite{BeFa}, the perfect obstruction theory $\phi_{X/Y}:E_{X/Y}\to \bbL_{X/Y}$ gives us an embedding of the intrinsic normal sheaf into the vector bundle stack
$$h^1/h^0(\bbL_{X/Y}^\vee)\hookrightarrow h^1/h^0(E_{X/Y}^\vee)=:\fE_{X/Y}.$$
Moreover the intrinsic normal cone of the morphism $f$ is naturally embedded  
into the intrinsic normal sheaf 
$$\fC_{X/Y}\hookrightarrow \fE_{X/Y}.
$$

Let ${Ob}_{X }=h^1(E_{X }^\vee)$ and $Ob_{Y }=h^1(E_{Y }^\vee)$ be the obstruction sheaves and let $\sigma_Y:Ob_Y\to \sO_Y$ be a cosection.
The morphism $f^*E_{Y }\to E_{X }$ induces a morphism $Ob_{X }=h^1(E_{X }^\vee)\to h^1(f^*E_Y^\vee)= f^*h^1(E_{Y }^\vee)=f^*Ob_{Y }$. Hence $\sigma_Y$ induces a cosection 
$$\sigma_X:Ob_{X }\lra f^*Ob_{Y }\lra f^*\sO_Y=\sO_X$$
of $Ob_{X }$. We call $\sigma_X$ the cosection induced from $\sigma_Y$. 

\begin{defi} We denote by $X(\sigma)=\sigma_X^{-1}(0)$ and $Y(\sigma)=\sigma_Y^{-1}(0)$ the vanishing loci of the cosections $\sigma_X$ and $\sigma_Y$ respectively.
\end{defi}

Here $\sigma_X^{-1}(0)$ is the subvariety defined by the image (ideal) of $\sigma_X$.

\begin{lemm}\label{13} If $f$ is virtually smooth, $X(\sigma)=f^{-1}(Y(\sigma))=Y(\sigma)\times_Y X$.\end{lemm}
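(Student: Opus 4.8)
The plan is to show that the cosection $\sigma_X$ vanishes at a point $x\in X$ precisely when $\sigma_Y$ vanishes at $f(x)\in Y$, which immediately gives the set-theoretic identity $X(\sigma)=f^{-1}(Y(\sigma))$; then one upgrades this to the scheme-theoretic (and fiber-product) statement by tracking the ideals that define these loci. The key observation, which I would establish first, is that the natural map $Ob_X\to f^*Ob_Y$ induced by $\varphi$ in the diagram \eqref{16} is \emph{surjective}. This is where virtual smoothness enters: applying $h^1((-)^\vee)$ to the distinguished triangle $f^*E_Y\to E_X\to E_{X/Y}\to$ and using that $E_{X/Y}$ has amplitude $[-1,0]$, so $h^1(E_{X/Y}^\vee)=0$, the long exact sequence yields $h^1(E_X^\vee)\to h^1(f^*E_Y^\vee)\to 0$, i.e. $Ob_X\twoheadrightarrow f^*Ob_Y$.

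Given this surjectivity, the factorization $\sigma_X: Ob_X\to f^*Ob_Y\xrightarrow{f^*\sigma_Y}\sO_X$ shows that the image ideal of $\sigma_X$ in $\sO_X$ equals the image ideal of $f^*\sigma_Y$. Now $f^*\sigma_Y: f^*Ob_Y\to \sO_X$ is the pullback of $\sigma_Y:Ob_Y\to\sO_Y$, and pulling back a surjection of sheaves onto an ideal is right-exact, so the image ideal of $f^*\sigma_Y$ is exactly $f^{-1}(\mathrm{im}\,\sigma_Y)\cdot\sO_X$, the ideal generated by the pullback of the ideal $I_Y=\mathrm{im}(\sigma_Y)\subset\sO_Y$ defining $Y(\sigma)$. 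In other words, the ideal defining $X(\sigma)$ in $\sO_X$ is $f^*I_Y\cdot\sO_X$, which is precisely the ideal defining the scheme-theoretic preimage $f^{-1}(Y(\sigma))$, and this preimage is by definition the fiber product $Y(\sigma)\times_Y X$. This chain of identifications gives $X(\sigma)=f^{-1}(Y(\sigma))=Y(\sigma)\times_Y X$ as closed substacks of $X$.

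The only genuine subtlety — and the step I would treat most carefully — is the surjectivity $Ob_X\to f^*Ob_Y$, since it relies crucially on the hypothesis that $f$ is virtually smooth (so $E_{X/Y}$ sits in degrees $[-1,0]$ and its dual has no $h^1$); without this, $\sigma_X$ could a priori cut out a strictly larger locus than $f^{-1}(Y(\sigma))$. Everything else is a routine diagram chase plus the standard fact that taking $h^1((-)^\vee)$ is right-exact on the relevant range. One should also remark that the construction of $\sigma_X$ and all the maps involved are compatible with étale localization, so the argument, stated above for schemes, carries over verbatim to \DM stacks.
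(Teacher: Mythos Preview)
Your approach is essentially identical to the paper's: both argue that virtual smoothness forces $Ob_X\to f^*Ob_Y$ to be surjective via the long exact sequence of the dual triangle, and then conclude that the image ideal of $\sigma_X$ coincides with that of $f^*\sigma_Y$. There is one index slip to fix: from the triangle $E_{X/Y}^\vee\to E_X^\vee\to f^*E_Y^\vee\mapright{+1}$ the term following $h^1(f^*E_Y^\vee)$ is $h^2(E_{X/Y}^\vee)$, not $h^1(E_{X/Y}^\vee)$; the latter is the relative obstruction sheaf $Ob_{X/Y}$ and is typically nonzero, whereas $h^2(E_{X/Y}^\vee)=0$ is the correct consequence of amplitude $[-1,0]$ (since $E_{X/Y}^\vee$ then has amplitude $[0,1]$). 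With that correction your argument goes through and matches the paper's proof, your version being slightly more explicit about the scheme-theoretic identification of ideals.
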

\begin{proof}
From the distinguished triangle $E_{X/Y}^\vee\to E_{X }^\vee\to f^*E_{Y }^\vee\mapright{+1}$, we obtain an exact sequence
\[ \cdots\lra Ob_{X }\lra f^*Ob_{Y }\lra h^2(E_{X/Y}^\vee)=0\]
where the identity holds because $E_{X/Y}$ is perfect of amplitude  $[-1,0]$. Since $Ob_{X }\to f^*Ob_{Y }$ is surjective, 
\[ \sigma_X:Ob_{X }\twoheadrightarrow f^*Ob_{Y }\mapright{f^*\sigma_Y} \sO_X\]
is zero if and only if $f^*\sigma_Y$ is zero. 
This proves the lemma.
\end{proof}

By Lemma \ref{13}, we have a Cartesian square
\beq\label{17}\xymatrix{
X(\sigma)\ar[d]_{\imath'}\ar[r]^{f_\sigma} & Y(\sigma)\ar[d]^{\imath}\\
X\ar[r]_f & Y 
}\eeq
where the vertical arrows are inclusion maps.

\medskip
We recall Manolache's virtual pullback.
\begin{defi}\label{14} \cite{Mano1} 
Suppose we have an embedding of the intrinsic normal cone $\fC_{X/Y}$ of $f:X\to Y$ into a vector bundle stack $\fE_{X/Y}$. Consider a fiber product diagram
\[\xymatrix{
X'\ar[r]^{f'} \ar[d]_{p} & Y'\ar[d]^q\\
X\ar[r]^f& Y.
}\]
Then the \emph{virtual pullback} is defined as the composite
$$f^!:A_*(Y')\to A_*(\fC_{X'/Y'})\to A_*(\fC_{X/Y}\times_XX')\to A_*(p^*\fE_{X/Y})\to  A_{*+d}(X'),
$$
where the first arrow is $[B]\mapsto [\fC_{B\times_{Y'}X'/B}]$; the second arrow is via the inclusion $\fC_{X'/Y'}\to
\fC_{X/Y}\times_XX'$; the last arrow is the Gysin map $0^!_{\fE_{X/Y}}: A_*(p^*\fE_{X/Y})\to  A_{*+d}(X')$
for the vector bundle stack $\fE_{X/Y}$. Here $d$ is the rank of $E_{X/Y}$.
\end{defi}

If $X$ is not connected, we consider each connected component separately.

Letting $X'=X$ and $Y'=Y$, we get $f^!:A_*(Y)\to A_{*+d}(X)$. Letting $X'=X(\sigma)$ and $Y'=Y(\sigma)$, we obtain $f_\sigma^!:A_*(Y(\sigma))\to A_*(X(\sigma))$. By \cite[Theorem 2 (i)]{Mano1}, these fit into a commutative diagram
\beq\label{18}\xymatrix{
A_*(Y(\sigma))\ar[r]^{f_\sigma^!}\ar[d]_{\imath_*} & A_{*+d}(X(\sigma))\ar[d]^{\imath'_*}\\
A_*(Y)\ar[r]_{f^!} & A_{*+d}(X)
}\eeq

For the virtual pullback formula, we need the following analogue of \cite[Lemma 4.7]{Mano1}.
\def\ker{\mathrm{ker} }
\begin{lemm}\label{110}
Let $f:X\to Y$ be a morphism of \DM stacks and ${\cN}$ be a vector bundle stack on $X$ such that $\fC_{X/Y}\subset {\cN}$. Let $\fE$ be a vector bundle stack on $Y$ with the zero section $0_\fE:Y\to \fE$.
Let $U\subset Y$ be open and $\sigma:\fE|_U\to \CC_U$ be a surjective map of vector bundle stacks. Let $Y(\sigma)=Y-U$;
$X(\sigma)=X\times_YY(\sigma)$ and $f_\sigma: X(\sigma)\to Y(\sigma)$ the induced morphism. 
Let $\fE(\sigma)=\fE|_{Y(\sigma)}\cup \ker (\sigma)$.
Then $(\fC_{X/\fE})_{\mathrm{red}}\subset f^*\fE\oplus {\cN}$ where $\fC_{X/\fE}$ denotes the normal cone of the morphism 
$0_\cE\circ f: X\to Y\to \fE$.   
Moreover for each irreducible $B\subset \fE(\sigma)$,
\[ f_\sigma^!0^!_{\fE,\loc}[B]=0_{f^*\fE\oplus {\cN},\loc}^![\fC_{X\times_\fE B/B}] \quad\text{in } A_*(X(\sigma)) 
\]
where $0^!_{\fE,\loc}$ and $0_{f^*\fE\oplus {\cN},\loc}^!$ denote the localized Gysin maps with respect to the cosections 
$\sigma:\fE|_U\to \CC_U$ and $(f^*\sigma,0):f^*\fE\oplus{\cN}|_{f^{-1}(U)}\to \CC_{f^{-1}(U)}$ respectively.
\end{lemm}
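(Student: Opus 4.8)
The plan is to handle the two assertions in turn. The containment $(\fC_{X/\fE})_\redd\subset f^*\fE\oplus\cN$ comes from the exact sequences of cone stacks attached to the composition $X\xrightarrow{f}Y\xrightarrow{0_\fE}\fE$, together with the hypothesis $\fC_{X/Y}\subset\cN$. For the cycle identity I would reproduce the proof of \cite[Lemma 4.7]{Mano1} step by step, replacing each Gysin map by its cosection localized counterpart; the only genuinely new work is to check that every cone and every $\mathbb{A}^1$-family of rational equivalence that occurs is supported in the locus on which the relevant localized Gysin map is defined.

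\emph{First assertion.} Since $0_\fE\colon Y\to\fE$ is the zero section of the vector bundle stack $\fE$, it is a regular closed embedding whose normal bundle stack is $\fE$ itself; locally, on a presentation $\fE=[E_1/E_0]$, it is the zero section of a vector bundle, and an elementary computation with the associated graded of the ideal identifies $\fC_{X/\fE}$ with $f^*\fE\times_X\fC_{X/Y}$ as cone stacks over $X$. With $\fC_{X/Y}\subset\cN$ this gives $(\fC_{X/\fE})_\redd\subset f^*\fE\times_X\cN=f^*\fE\oplus\cN$. For the cone $\fC_{X\times_\fE B/B}$, note that $X\times_\fE B\hookrightarrow X$ is a closed embedding (the base change of $B\hookrightarrow\fE$ along $X\to\fE$), so running the exact sequence of cone stacks for $X\times_\fE B\to B\hookrightarrow\fE$ bounds $\fC_{X\times_\fE B/B}$ by a cone assembled from $\fC_{X/\fE}$; combined with the previous containment, and using $X\times_\fE B=X\times_Y\pi(B)$ for conical $B$ (where $\pi\colon\fE\to Y$), this gives $\fC_{X\times_\fE B/B}\subset(f^*\fE\oplus\cN)|_{X\times_\fE B}$. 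Finally, since $B\subset\fE(\sigma)$, the $f^*\fE$-component of $\fC_{X\times_\fE B/B}$ is annihilated by $f^*\sigma$ over $f^{-1}(U)$ — this uses Lemma \ref{13} and the cone reduction property — so $\fC_{X\times_\fE B/B}$ lies in $(f^*\fE\oplus\cN)(f^*\sigma,0)$ and the right hand side of the asserted formula is well defined.

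\emph{Cycle identity.} The un-localized analogue $f^!\,0^!_\fE[B]=0^!_{f^*\fE\oplus\cN}[\fC_{X\times_\fE B/B}]$ is \cite[Lemma 4.7]{Mano1}; its proof proceeds by a deformation to the normal cone — deforming $X\times_\fE B$ inside $B$ (equivalently, a double deformation of the normal cone of $X$ in $Y$ placed inside $\fE$) — followed by the commutativity of Gysin maps with flat pullback and with proper pushforward. To obtain the localized statement I would run this argument verbatim, replacing every Gysin map $0^!_\fE$, $0^!_\cN$, $0^!_{f^*\fE\oplus\cN}$ by the cosection localized Gysin map of \cite{KiemLi} and keeping each cone or $\mathbb{A}^1$-family of rational equivalence as the same cycle, now regarded inside its $(\sigma)$-locus. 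The facts that make this legitimate are in \cite{KiemLi}: the localized Gysin map agrees with the ordinary one after pushforward to the ambient Chow group; it commutes with flat pullback and with proper pushforward along closed embeddings; and the intrinsic normal cone of each auxiliary morphism involved is supported in the kernel of the relevant cosection over $U$. It is convenient to first reduce, by pushforward-compatibility, to the case $\pi(B)=Y$ (so $X\times_\fE B=X$ and the zero section of $\fE$ factors through $B$). The main obstacle is exactly this tracking of supports through the deformations, and within it two points deserve care: the cosection on $f^*\fE\oplus\cN$ is $(f^*\sigma,0)$, trivial on the $\cN$-summand, so one needs the localized Gysin map on $f^*\fE\oplus\cN$ to be compatible, via $f^*\fE\oplus\cN\to f^*\fE$, with the localized Gysin map on $f^*\fE$ for $f^*\sigma$ (the $\cN$-direction then being an ordinary Gysin operation); and the statement allows an arbitrary irreducible $B$, whereas the argument is cleanest for conical $B$, so one reduces to that case or works with the components of the cone deformation of $B$. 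With these in hand, the chain of equalities in \cite[Lemma 4.7]{Mano1} localizes termwise.
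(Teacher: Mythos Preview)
Your treatment of the first assertion matches the paper's: both identify $\fC_{X/\fE}=f^*\fE\times_X\fC_{X/Y}$ (the paper cites \cite[Example 2.37]{Mano1}) and conclude from $\fC_{X/Y}\subset\cN$.

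For the cycle identity, however, the paper takes a genuinely different route from yours. You propose to rerun Manolache's deformation-to-the-normal-cone argument for \cite[Lemma 4.7]{Mano1} and track that every intermediate cone and rational equivalence stays inside the kernel locus, so that ordinary Gysin maps can be replaced by localized ones termwise. The paper does \emph{not} attempt this. Instead it unwinds the very definition of $0^!_{\fE,\loc}$ from \cite{KiemLi}: after disposing of the case $B\subset\fE|_{Y(\sigma)}$ (where localized $=$ ordinary), it chooses a generically finite proper $\rho:Z\to Y$ and a Cartier divisor $D$ on $Z$ such that $\rho^*\sigma$ extends to a surjection $\tilde\sigma:\rho^*\fE\to\sO_Z(D)$, sets $\tilde\fE=\ker\tilde\sigma$, lifts $B$ to $\tilde B\subset\tilde\fE$, and uses the defining formula $0^!_{\fE,\loc}[B]=\frac{1}{k}{\rho_\sigma}_*(0^!_{\tilde\fE}[\tilde B]\cdot D)$. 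Both sides of the asserted identity are then computed as \emph{un-localized} quantities on $Z$ and $Z'=X\times_Y Z$, and compared using Vistoli's rational equivalence and \cite[Example 2.37]{Mano1} (e.g.\ the key step $[\fC_{Z'/Z}]\cdot D'=[\fC_{D'/D}]$). No support-tracking through a deformation family is needed.

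What each approach buys: the paper's method converts the problem into ordinary intersection theory on a resolution, so all the delicate ``does the rational equivalence stay in $\fE(\sigma)$?'' questions evaporate. Your method, if carried through, would be more conceptual and closer in spirit to Manolache's original, but the step you flag as ``the main obstacle'' --- that every $\bbA^1$-family of rational equivalence in the double deformation lies in the $(\sigma)$-locus --- is precisely what you have not done, and it is not a consequence of the compatibility statements in \cite{KiemLi} that you cite (those give agreement after pushforward and functoriality under flat/proper maps, not containment of intermediate cycles). So as written your proposal is a plausible plan rather than a proof; the paper's route sidesteps the obstacle entirely.
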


\begin{proof}
The inclusion $\fC_{X/\fE}\subset f^*\fE\oplus {\cN}=f^*\fE\times_X{\cN}$ follows from the identity $\fC_{X/\fE}=f^*\fE\times_X\fC_{X/Y}$ proved in \cite[Example 2.37]{Mano1}. We prove the last statement. If $B\subset \fE|_{Y(\sigma)}$, the localized Gysin maps are the ordinary Gysin maps and hence the lemma follows from \cite[Lemma 4.7]{Mano1}. So we may suppose $B\nsubseteq \fE|_{Y(\sigma)}$. 
Further, by shrinking $Y$ if necessary, we can assume that $Y$ is integral and $B\to Y$ is dominant.

By \cite[\S2]{KiemLi}, we can choose a projective variety $Z$, a generic finite and proper morphism $\rho:Z\to Y$, a Cartier divisor $D$ on $Z$ such that $D$ is a linear combination of integral Cartier divisors; $D$ fits into the commutative diagram
\[\xymatrix{
Z\ar[r]^\rho & Y\\
D\ar@{^(->}[u]\ar[r]^{\rho_\sigma} & Y(\sigma)\ar@{^(->}[u]
}\]
and  $\rho^*\sigma$ extends to a surjective map $\tilde{\sigma}:\rho^*\fE\to \sO_Z(D)$, where by abuse of notation we think of
$\sO_Z(D)$ as the total space of the line bundle $\sO_Z(D)$.

Let $\tilde\fE=\ker(\tilde\sigma)$, which is a bundle stack, and choose an integral $\tilde B\subset \tilde\fE$ such that $\tilde\rho_*[\tilde B]=k[B]$ for some $k>0$, where $\tilde\rho:\tilde\fE\to\fE(\sigma)$ is the composition $\tilde\fE\subset \rho^*\fE\to \fE$ of the inclusion with the first projection $\rho^*\fE=\fE\times_YZ$. Then by the definition in \cite[\S2]{KiemLi}, 
$$0^!_{\fE,\loc}[B]=\frac1{k}{\rho_\sigma}_*(0^!_{\tilde\fE}[\tilde B]\cdot D),
$$
where $\cdot D$ denotes the refined intersection for the inclusion $D\subset Z$ defined in \cite[Chapter 6]{Fulton}.

We further simplify the situation as follows. Because $\ti\cE$ is a bundle stack, there are integral $Z_i\sub Z$
and rational $c_i$ so that $[\ti B]=\sum c_i[\ti\cE|_{Z_i}]\in A\lsta\ti\cE$. Because rational equivalence in $\ti\cE$ induces
a rational equivalence in $\cE(\sigma)$, to prove the theorem, we only need to consider the case where $\ti B=\ti\cE$.   
Thus the above identity becomes $0^!_{\fE,\loc}[B]=\frac1{k}{\rho_\sigma}_*[D]$.

Consider the Cartesian diagrams
\[\xymatrix{
D\ar[r]^{\rho_\sigma} & Y(\sigma) \ar[r]^{\sub} & Y\\ 
D'\ar[u]^{f'_\sigma}\ar[r]_{\rho'_\sigma} & X(\sigma)\ar[u]_{f_\sigma}\ar[r]_{\sub} & X\ar[u]_f.
}\]
Since virtual pullbacks commute with pushforwards 
(cf. \cite[Theorem 4.1 (i)]{Mano1}), we have
$$ f_\sigma^!0^!_{\fE,\loc}[B]=\frac1{k} f_\sigma^!{\rho_\sigma}_*[D]=\frac1{k}{\rho'_\sigma}_*{f'_\sigma}^![D] = 
\frac1{k} {\rho'_\sigma}_*0^!_{{\cN}|_{D'}}[\fC_{D'/D}].$$

Consider the commutative diagram
\[\xymatrix{
Z'\ar[r]^{f'}\ar[d]^{\rho'} & Z\ar[r]^0\ar[d]^{\rho} & \tilde\fE\ar[d]^{\tilde\rho} & \tilde B\ar@{=}[l]\\
X\ar[r]^f & Y\ar[r]^{0} & \fE(\sigma) & B\ar[l]
}\]
where $Z':=X\times_Y Z$. 
Let 
$$\tilde\rho^f:{f'}^*\tilde\fE\to {f}^*\fE(\sigma)
$$ 
denote the pullback of $\tilde\rho$. 
Since $\tilde\rho_*[\tilde \cE]=k[B]$, we have 
$$\tilde\rho^f_*[\fC_{X\times_\fE \tilde \cE/\tilde \cE}]=k[\fC_{X\times_\fE B/B}].$$
By the definition of the localized Gysin map \cite[\S2]{KiemLi}, we have
$$0_{f^*\fE\oplus {\cN},\loc}^![\fC_{X\times_\fE B/B}]=
\frac1{k}{\rho'_\sigma}_*\left( 0^!_{{f'}^*\tilde\fE\oplus {\rho'}^*{\cN}}[ \fC_{X\times_\fE \tilde \cE/\tilde \cE}] \cdot D'\right).
$$
Here by $\cdot D'$ we mean the intersection with $D\sub Z$ via the Cartesian square
$$\begin{CD}
D'@>>> Z'\\
@VVV @VVf'V\\
D @>>> Z
\end{CD}
$$
Since $X\times_\fE\tilde\fE=Z'$, by \cite[Example 2.37]{Mano1}
$$\fC_{X\times_\fE \tilde \fE/\tilde \fE}=\fC_{Z'/\tilde\fE}=\fC_{Z'/Z}\times_{Z'}{f'}^*\tilde\fE.
$$
We thus have
$$0_{f^*\fE\oplus {\cN},\loc}^![\fC_{X\times_\fE B/B}]=\frac1{k}{\rho'_\sigma}_*\bigl(0^!_{{\cN}}[\fC_{Z'/Z}]\cdot D'\bigr)
=\frac1{k}{\rho'_\sigma}_*0^!_{{\cN|_{D'}}}\left([\fC_{Z'/Z}]\cdot D'\right)
$$ 
by the commutativity of Gysin maps. 

Let $L=\sO_Z(D)$ and let $L'={f'}^*L$.  We now prove that
$$[\fC_{D'/D}]=
[\fC_{Z'/Z}]\cdot D'.
$$
Indeed, by Vistoli's rational equivalence \cite{Vistoli} and \cite[Example 2.37]{Mano1}, 
and using $\fC_{D/Z}\cong L|_{D}$, we have
$$[\fC_{Z'/Z}]\cdot D'=0^!_{L'}[\fC_{D'/\fC_{Z'/Z}}]=0^!_{L'}[\fC_{D'/Z}]
=0^!_{L'}[\fC_{D'/\fC_{D/Z}}]
$$
$$=0^!_{L'}[\fC_{D'/D}\times_{D'}L'|_{D'}]=[\fC_{D'/D}].
$$
Therefore, we have
$$0_{f^*\fE\oplus {\cN},\loc}^![\fC_{X\times_\fE B/B}]=\frac1{k}{\rho'_\sigma}_*0^!_{{\cN|_{D'}}}[\fC_{D'/D}].$$
This proves the desired equality $ f_\sigma^!0^!_{\fE,\loc}[B]=0_{f^*\fE\oplus {\cN},\loc}^![\fC_{X\times_\fE B/B}].$
\end{proof}

The following is a cosection localized analogue of \cite[Corollary 4.9]{Mano1}.
\begin{theo}\label{1thmain1}
Let $f:X\to Y$ be a virtually smooth morphism of \DM stacks, and let $\sigma:Ob_{Y }\to \sO_Y$ be a cosection. Then
$$f_\sigma^![Y]\virtloc =[X]\virtloc.$$
\end{theo}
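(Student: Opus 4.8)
The plan is to mimic Manolache's proof of the virtual pullback formula (\cite[Corollary 4.9]{Mano1}) but keeping careful track of the supports of all cones and rational equivalences so that the cosection localized Gysin maps are defined and the computation stays inside the degeneracy loci $X(\sigma)$, $Y(\sigma)$. Recall that, by \cite{BeFa}, $[Y]\virtloc=0^!_{\fE_Y,\loc}[\fC_Y]$ where $\fE_Y=h^1/h^0(E_Y^\vee)$, and similarly $[X]\virtloc=0^!_{\fE_X,\loc}[\fC_X]$; here Lemma \ref{13} and \eqref{17} guarantee that the induced cosection $\sigma_X$ vanishes exactly on $f^{-1}(Y(\sigma))$, so $f_\sigma:X(\sigma)\to Y(\sigma)$ is the honest base change and all the localized classes live in the right places. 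The first step is to reduce, by the deformation to the normal cone / standard cone manipulations of \cite{Mano1}, the expression $f_\sigma^![Y]\virtloc$ to an application of the localized Gysin map $0^!_{f^*\fE_Y\oplus\fE_{X/Y},\loc}$ to the cone $\fC_{X/\fE_Y}$, where $0_{\fE_Y}\circ f:X\to \fE_Y$ and $\fE_{X/Y}$ is the vector bundle stack attached to the relative obstruction theory $\phi_{X/Y}$. This is exactly the content of Lemma \ref{110} applied with $Y$ in the role of the base, $\fE=\fE_Y$, $\cN=\fE_{X/Y}$ (the inclusion $\fC_{X/Y}\subset\fE_{X/Y}$ was recalled right after Definition \ref{11}), and $B$ an irreducible component of $\fC_Y\subset\fE_Y(\sigma)$ — the cone reduction step of \cite{KiemLi} tells us $\fC_Y$ is supported in $\fE_Y(\sigma)$, so Lemma \ref{110} applies to each component.

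Concretely, writing $[\fC_Y]=\sum_i m_i[B_i]$ with $B_i\subset\fE_Y(\sigma)$ irreducible, Lemma \ref{110} gives
\[
f_\sigma^!0^!_{\fE_Y,\loc}[B_i]=0^!_{f^*\fE_Y\oplus\fE_{X/Y},\loc}[\fC_{X\times_{\fE_Y}B_i/B_i}]
\]
in $A_*(X(\sigma))$, and by linearity
\[
f_\sigma^![Y]\virtloc=\sum_i m_i\,0^!_{f^*\fE_Y\oplus\fE_{X/Y},\loc}[\fC_{X\times_{\fE_Y}B_i/B_i}]
=0^!_{f^*\fE_Y\oplus\fE_{X/Y},\loc}[\fC_{X/\fE_Y}]
\]
where for the last equality I use the identity $\fC_{X/\fE_Y}=f^*\fE_Y\times_X\fC_{X/Y}$ from \cite[Example 2.37]{Mano1} together with the compatibility $\sum_i m_i[\fC_{X\times_{\fE_Y}B_i/B_i}]=[\fC_{X/\fE_Y}]$ that comes from $[\fC_Y]=\sum m_i[B_i]$ and the same example. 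The second step is then to identify the right-hand side with $[X]\virtloc$. For this I invoke the compatibility of the three obstruction theories encoded in the distinguished triangle \eqref{16}, which produces a short exact sequence of vector bundle stacks $0\to\fE_{X/Y}\to\fE_X\to f^*\fE_Y\to 0$ (dually to $f^*E_Y\to E_X\to E_{X/Y}$). Manolache's argument (the proof of \cite[Corollary 4.9]{Mano1}, via her Proposition 4.5 / the compatibility of Gysin maps for short exact sequences of bundle stacks) shows that applying $0^!_{f^*\fE_Y\oplus\fE_{X/Y}}$ to the appropriate deformation of $\fC_X$ inside $\fE_X$ recovers $0^!_{\fE_X}[\fC_X]$; the point to check is that this comparison respects the cosection, i.e. that the cosection $(f^*\sigma_Y,0)$ on $f^*\fE_Y\oplus\fE_{X/Y}$ corresponds under the short exact sequence to $\sigma_X$ on $\fE_X$. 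But that is immediate from the construction of $\sigma_X$ as the composite $Ob_X\twoheadrightarrow f^*Ob_Y\to\sO_X$, since the surjection $Ob_X\to f^*Ob_Y$ is precisely $h^1$ of the dual triangle and the kernel is killed by $\sigma_X$ by design.

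The main obstacle — and the place where genuine care is needed rather than formal bookkeeping — is the second step: verifying that the localized Gysin maps are compatible with the short exact sequence $0\to\fE_{X/Y}\to\fE_X\to f^*\fE_Y\to 0$ in the presence of the cosection. In the non-localized setting this is \cite[Proposition 2.9]{KiemLi}-type functoriality plus Manolache's bundle-stack computations; in the localized setting one must rerun the defining construction of $0^!_{\cdot,\loc}$ from \cite[\S2]{KiemLi} (choose $Z\to Y$, a Cartier divisor $D$, extend the cosection to $\tilde\sigma:\rho^*\fE\to\sO_Z(D)$, restrict, intersect with $D$) simultaneously for the three stacks, checking that the chosen $Z$, $D$ and the extension $\tilde\sigma$ can be made compatible — which they can, because the cosection $\sigma_X$ factors through $f^*\sigma_Y$, so one can take the same $\rho:Z\to Y$ and $D$, pull everything back to $X$, and use that rational equivalences in $\ker\tilde\sigma$ push forward to rational equivalences in the localized target, exactly as in the proof of Lemma \ref{110}. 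Once this compatibility is in place, combining it with the displayed chain of equalities yields $f_\sigma^![Y]\virtloc=0^!_{\fE_X,\loc}[\fC_X]=[X]\virtloc$, and pushing forward by $\imath'$ and comparing with \eqref{18} and \eqref{1e2} confirms consistency with the non-localized formula. I would also remark, as in \cite{Mano1}, that if $X$ is disconnected one argues component by component, which is harmless here since the cosection and the degeneracy locus respect the decomposition.
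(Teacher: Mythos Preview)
Your proposal has two related problems, one concrete and one conceptual.

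First, the displayed equality
\[
\sum_i m_i\,0^!_{f^*\fE_Y\oplus\fE_{X/Y},\loc}[\fC_{X\times_{\fE_Y}B_i/B_i}]
=0^!_{f^*\fE_Y\oplus\fE_{X/Y},\loc}[\fC_{X/\fE_Y}]
\]
is not justified and in fact the right-hand side is not defined. By \cite[Example 2.37]{Mano1} one has $\fC_{X/\fE_Y}=f^*\fE_Y\times_X\fC_{X/Y}$; over the open set $U=X-X(\sigma)$ this is all of $f^*\fE_Y|_U\times_U\fC_{X/Y}|_U$, which is \emph{not} contained in $\ker(f^*\sigma)|_U\times_U\fE_{X/Y}|_U$. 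Hence $[\fC_{X/\fE_Y}]$ does not lie in $A_*\bigl((f^*\fE_Y\oplus\fE_{X/Y})(\sigma)\bigr)$ and the localized Gysin map cannot be applied to it. The correct output of Lemma \ref{110} is only the sum on the left, and that sum does \emph{not} repackage into a single cone; the passage $[\fC_Y]=\sum m_i[B_i]\Rightarrow \sum m_i[\fC_{X\times_{\fE_Y}B_i/B_i}]=[\fC_{X/\fE_Y}]$ is exactly the kind of ``linearity of cones'' that fails in general.

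Second, and more importantly, you have misidentified the main obstacle. The heart of Manolache's proof of \cite[Corollary 4.9]{Mano1} is the double deformation space $M^0_Y$ interpolating between the point and $\fC_Y$, producing a family of cones $\fC_{X\times\PP^1/M^0_Y}$ inside $h^1/h^0(c(u)^\vee)$ whose generic fiber is $\fC_X$ and whose special fiber over $(0:1)$ computes $\sum m_i[\fC_{X\times_{\fE_Y}B_i/B_i}]$ (via Vistoli). For the \emph{localized} Gysin maps to make sense along this family one must prove the support inclusion
\[
\mathrm{Supp}\,\fC_{X\times\PP^1/M^0_Y}\subset \ker\bigl[h^1/h^0(c(u)^\vee)\to q^*\sO_{\PP^1}(-1)\bigr],
\]
i.e.\ a cone-reduction statement over the \emph{entire} $\PP^1$. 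Over $\PP^1-\{(0:1)\}$ this is the usual cone reduction for $\fC_X$ from \cite{KiemLi}, but at the special fiber $(0:1)$ it is a new statement: one must show that every irreducible component of the special fiber is supported in $\ker(f^*\sigma)\times_X h^1/h^0(E_{X/Y}^\vee)$. The paper proves this by choosing local smooth embeddings $X\subset V\to W\supset Y$, writing the double deformation space explicitly as the closure of a graph, approaching a general point of a bad component by a curve $(\Delta,0)$, and then invoking the argument of \cite[Lemma 4.4, Corollary 4.5]{KiemLi} to force the limit $v_1$ into $\ker(f^*\sigma)$. This is genuine local analysis, not a formal bookkeeping of compatible resolutions $(Z,D,\tilde\sigma)$; your ``rerun the construction of $0^!_{\cdot,\loc}$ with the same $Z$ and $D$'' does not touch it, because the issue is whether the \emph{cone} lies in the kernel, not whether the Gysin maps are formally compatible once the cone is already known to lie there.
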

\begin{proof} 
The proofs of Theorem 4 (functoriality) and Corollary 4 in \cite{Mano1} work with necessary modifications. 
The reader is invited to go through the proofs in \cite{Mano1} with the proof of deformation invariance \cite[Theorem 5.2]{KiemLi} for cosection localized virtual cycles in mind. 

With Lemma \ref{110} at hand, one will find that the only thing to be checked is the inclusion of the support
\beq\label{19}\mathrm{Supp}\, \fC_{X\times \PP^1/M^0_{Y}}\subset \mathrm{ker}\left[ h^1/h^0(c(u)^\vee)\lra q^*\sO_{\PP^1}(-1) \right]\eeq
where $M^0_Y$ is the deformation space from the reduced point $\{pt\}$ to the intrinsic normal cone $\fC_Y$ and $c(u)$ is the cone of the morphism
$$
u=(x_0\cdot \mathrm{id}, x_1\cdot \varphi):p^*f^*E_Y\otimes q^*\sO_{\PP^1}(-1)\lra p^*f^*E_Y\oplus p^*E_X\quad\text{in}\quad D(X\times\PP^1).
$$
Here $x_0,x_1$ are the homogeneous coordinates of $\PP^1$; $p,q$ are projections from $X\times \PP^1$ to $X$ and  $\PP^1$ respectively; $\varphi:f^*E_Y\to E_X$ is the morphism in \eqref{16}. The morphism $ h^1/h^0(c(u)^\vee)\lra q^*\sO_{\PP^1}(-1) $ comes from the commutative diagram
$$\xymatrix{
h^1/h^0(c(u)^\vee)\ar[r]\ar[d] & p^*f^*h^1/h^0(E_Y^\vee)\oplus p^*h^1/h^0(E_X^\vee) \ar[r]\ar[d] & p^*f^*h^1/h^0(E_Y^\vee)\otimes q^*\sO_{\PP^1}(1)\ar[d]\\
q^*\sO_{\PP^1}(-1)\ar[r] & q^*\sO_{\PP^1}\oplus q^*\sO_{\PP^1} \ar[r] & q^*\sO_{\PP^1}(1)
}$$
where the middle and right vertical arrows come from the cosection $\sigma$. 

By the double deformation space construction, 
$$\fC_{X\times \PP^1/M^0_{Y}}\times_{\PP^1}(\PP^1-\{(0:1)\})=\fC_X\times (\PP^1-\{(0:1)\}).$$ 
By the cone reduction (\cite[\S4]{KiemLi}), we have the inclusion of the support 
$$\mathrm{Supp}\, \fC_X \subset \mathrm{ker}\left[ h^1/h^0(E^\vee_X)\to \sO_X \right].$$
Hence \eqref{19} holds over $\PP^1-\{(0:1)\}$. 

Let $D=\fC_{X\times \PP^1/M^0_{Y}}\times_{\PP^1}(\PP^1-\{(1:0)\})$ be open in $\fC_{X\times \PP^1/M^0_{Y}}$ containing the fiber over $(0:1)$. By diagram chase, 
$$h^1/h^0(c(u)^\vee)|_{\{(0:1)\}}\cong f^*h^1/h^0(E_Y^\vee)\times_X h^1/h^0(E_{X/Y}^\vee)$$ 
and the homomorphism in \eqref{19} over the point $(0:1)$ is 
$$(f^*\sigma,0):f^*h^1/h^0(E_Y^\vee)\times_X h^1/h^0(E_{X/Y}^\vee)\lra \sO_X.
$$
Therefore the theorem follows if we show that irreducible components $A$ of 
$D$ lying over $X\times \{(0:1)\}$ have support contained in
\beq\label{1501251}
\mathrm{ker}\left( 
f^*h^1/h^0(E_Y^\vee)\mapright{f^*\sigma} \sO_X \right)\times_X h^1/h^0(E_{X/Y}^\vee).
\eeq

Let $A$ be an irreducible component of $D$ lying over $X\times \{(0:1)\}$ and let $a$ be a general closed point in $A$. We claim that $a$ is contained in \eqref{1501251}.
Since the problem is local, we may assume $X$, $Y$ are affine, equipped with embeddings $X\subset V$, $Y\subset W$ into smooth affine varieties that fit into a commutative diagram
\[\xymatrix{
X\ar@{^(->}[r]\ar[d]_f & V\ar[d]^g\ar[r]^(.4){\gamma\circ g,\eta} & \bbA^m\times \bbA^n\ar[d]^{pr_1}\\
Y\ar@{^(->}[r] & W \ar[r]^\gamma & \bbA^m
}\]
such that the morphism $g:V\to W$ is a smooth, $X=\zero(\gamma\circ g,\eta)$, $Y=\zero(\gamma)$ and 
$$E_Y=[\sO_Y^{\oplus m}\mapright{d\gamma} \Omega_W|_Y], \quad E_{X/Y}=[\sO_X^{\oplus n}\mapright{d\eta} \Omega_{V/W}|_X].$$ 
Since we have nothing to prove when $\sigma=0$ at general points of the irreducible component $A$, we may assume that $\sigma$ is surjective.  

To prove the claim, we recall the double deformation space construction for $D$ (cf. \cite{KKP}): Let $\Gamma$ be the graph of the morphism
\[
V\times (\bbA^1_t-\{0\})\times (\bbA^1_s-\{0\})\lra \bbA^m\times\bbA^n, \quad (v,t,s)\mapsto ((ts)^{-1}\gamma\circ g(v), t^{-1}\eta(v))
\]
and let $\bar\Gamma$ be the closure of $\Gamma$ in $V\times \bbA^1_t\times\bbA^1_s\times\bbA^m\times \bbA^n$.
Here $\bbA^1_t$ and $\bbA^1_s$ denote the affine line with local coordinates $t$ and $s$ respectively. Then we have
$$\bar\Gamma\times_{\bbA^1_t}\{0\}/(pr_V^*T_V|_{\bar\Gamma\times_{\bbA^1_t}\{0\}}) = D.$$

Now we can prove the claim. Since $D\subset (\bar\Gamma-\Gamma)/(pr_V^*T_V|_{\bar\Gamma-\Gamma})$, we may choose a smooth pointed curve $(\Delta,0)$ with local coordinate $\delta$ and a morphism $\rho:\Delta\to \bar\Gamma$ such that $\rho(\Delta-\{0\})\subset \Gamma$ and $\rho(0)$ represents $a\in A$. Let $t_\Delta:\Delta\mapright{\rho} \bar\Gamma\mapright{pr_t} \bbA^1_t$ and $s_\Delta:\Delta\mapright{\rho} \bar\Gamma\mapright{pr_s} \bbA^1_s$ denote the compositions of $\rho$ and the projections to $\bbA^1_t$ and $\bbA^1_s$ respectively. Let $\rho_V:\Delta\to V$ denote the composition of $\rho$ with the projection $pr_V:\bar\Gamma\to V$ and let $v_0=\rho_V(0)\in X$. Then
$\rho(0)=(v_0,0,0,v_1,v_2),$ 
\beq\label{00}
v_1=\lim_{\delta\to 0} (t_\Delta s_\Delta)^{-1}\cdot \gamma\circ g\circ \rho_V\in \bbA^m,  \quad 
v_2= \lim_{\delta\to 0} t_\Delta^{-1}\cdot \eta\circ \rho_V \in \bbA^n.
\eeq
Since $\sO_Y^{\oplus m}\twoheadrightarrow Ob_Y\mapright{\sigma} \sO_Y$ is surjective, by copying the proofs of Lemma 4.4 and Corollary 4.5 in \cite{KiemLi}, we find that 
$v_1$ represents a point in $$\mathrm{ker}\left( 
f^*h^1/h^0(E_Y^\vee)\mapright{f^*\sigma} \sO_X \right)$$
and $v_2$ a point in $h^1/h^0(E_{X/Y}^\vee)$.
Therefore $a$ lies in \eqref{1501251}. This proves the theorem.
\end{proof}

\medskip

 \def\sD{{\mathscr D}}
\def\tsi{\tilde{\sigma}}

\subsection{Cosection localized virtual pullback}\label{ss2.2}
In \S\ref{ss2.1}, we considered the virtual pullback of a cosection localized virtual fundamental class
when there is a cosection $\sigma:Ob_Y\to \sO_Y$ on $Y$ that induces a 
cosection $Ob_X\to f^*Ob_Y\mapright{\sigma} \sO_X$ on $X$. Actually there is 
another way to combine virtual pullback with cosection localization. 
Consider the case when there is a cosection
$\sigma:Ob_X\to \sO_X$ that induces a cosection $\tsi:Ob_{X/Y}\to Ob_X\to \sO_X$ of the relative obstruction sheaf. 
In this subsection, we 
define the cosection localized virtual pullback 
$$f_\sigma^!:A_*(Y)\lra A_*(X(\tsi))$$
for a virtually smooth morphism $f:X\to Y$ where $X(\tsi)=\tsi^{-1}(0)$ (cf. Definition \ref{1clvp})
and prove the cosection localized virtual pullback formula (cf. Theorem \ref{1thmain2}).

\medskip

We let
$f: X\to Y$ be a virtually smooth morphism between \DM stacks as before; we let $\sigma=\sigma_X: \Ob_X\to\sO_X$
be a cosection, and form the (composite)  
$$\tsi=\sigma_{X/Y}: \Ob_{X/Y}=h^1(E_{X/Y}\dual)\lra h^1(E_X\dual)\mapright{\sigma}\sO_X.
$$
Let $X(\sigma)=\sigma^{-1}(0)$ and $X(\tsi)=\tsi^{-1}(0)$. Then by definition, we have an inclusion
$$\jmath:X(\sigma)\hookrightarrow X(\tsi).$$
 
We let 
$\cK=h^1/h^0(E_{X/Y}\dual)$. Then $\tsi$ induces a morphism $\cK\to \sO_X$ which we also denote by $\tsi$ by abuse of notation. As before, we denote
$$\cK(\tsi)=\ker[\tsi:\cK \to\sO_X]\defeq  \cK|_{X(\tsi)}\cup \ker[
\tsi|_U:\cK|_{U}\to \sO_{U}],
$$
where $U=X-X(\tsi)$ is the open where $\tsi$ is surjective.


\begin{lemm}\label{deg-vanishing}
We have
$$\mathrm{Supp} \,\fC_{{X}/ {Y}} \sub \cK(\tsi).
$$
\end{lemm}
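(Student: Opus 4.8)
The plan is to mimic the cone reduction argument from \cite[\S4]{KiemLi}, which establishes that $\mathrm{Supp}\,\fC_X\subset\ker[h^1/h^0(E_X^\vee)\to\sO_X]$, but carried out in the relative setting. First I would reduce to the local situation: since the statement is \'etale local on $X$, I can assume $X$ and $Y$ are affine and choose compatible embeddings $X\subset V$, $Y\subset W$ into smooth affine varieties together with a smooth morphism $g:V\to W$ and functions $\gamma:W\to\bbA^m$, $\eta:V\to\bbA^n$ with $X=\zero(\gamma\circ g,\eta)$, $Y=\zero(\gamma)$ and presentations $E_Y=[\sO_Y^{\oplus m}\mapright{d\gamma}\Omega_W|_Y]$, $E_{X/Y}=[\sO_X^{\oplus n}\mapright{d\eta}\Omega_{V/W}|_X]$, exactly as in the proof of Theorem \ref{1thmain1}. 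In these terms $\fC_{X/Y}$ sits inside $h^1/h^0(E_{X/Y}^\vee)=[\bbA^n_X/T_{V/W}|_X]$, realized as a quotient of $C_{X/V}$ (relative to $g$) by the smooth group $T_{V/W}|_X$, and $\tsi$ is a cosection of the trivial bundle $\sO_X^{\oplus n}$ factoring the obstruction cosection $\sigma$.

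Next I would run the deformation-to-the-normal-cone / test-curve argument. Let $A$ be an irreducible component of $\fC_{X/Y}$ and $a$ a general point of $A$. Using the deformation space from $X$ to $C_{X/V}$ (relative to the smooth morphism $g$), choose a smooth pointed curve $(\Delta,0)$ with coordinate $\delta$ and a morphism $\rho:\Delta\to V\times\bbA^1_t$ landing generically in the graph locus, so that $\rho(0)$ represents $a$; write $\rho_V:\Delta\to V$, $t_\Delta:\Delta\to\bbA^1_t$, $v_0=\rho_V(0)\in X$, and
$$v=\lim_{\delta\to 0} t_\Delta^{-1}\cdot\eta\circ\rho_V\in\bbA^n,$$
which is the $\bbA^n$-component of $a$. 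The key computation—copying Lemma 4.4 and Corollary 4.5 of \cite{KiemLi}—is that, because the composite $\sO_X^{\oplus n}\twoheadrightarrow Ob_{X/Y}\mapright{\tsi}\sO_X$ is what governs the leading-order behavior of $\eta$ along test curves into the cone, the vector $v$ must lie in $\ker[\sO_X^{\oplus n}|_{v_0}\to\sO_X|_{v_0}]$ induced by $\tsi$. Since $\sigma$ (hence $\tsi$) vanishes identically on $X(\tsi)$, the only content is at points $v_0\in U$ where $\tsi$ is surjective, and there the argument of \cite{KiemLi} gives $\tsi(v)=0$, i.e. $a\in\cK(\tsi)$. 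Hence $\mathrm{Supp}\,\fC_{X/Y}\subset\cK(\tsi)$.

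The main obstacle I anticipate is bookkeeping the quotient by $T_{V/W}|_X$ correctly: the cone $\fC_{X/Y}$ is not $C_{X/V}$ but its stack quotient, so I must check that the cosection $\tsi$ on $h^1/h^0(E_{X/Y}^\vee)$ is pulled back from the one on $\sO_X^{\oplus n}$ compatibly with this quotient, and that the test-curve limit $v$ is well-defined modulo $T_{V/W}|_X$ and its membership in $\ker\tsi$ is independent of the lift. This is precisely the relative analogue of the identification done in Theorem \ref{1thmain1}, where $h^1/h^0(c(u)^\vee)|_{(0:1)}\cong f^*h^1/h^0(E_Y^\vee)\times_X h^1/h^0(E_{X/Y}^\vee)$ and the cosection on the second factor was handled by the very lemmas of \cite{KiemLi} I am invoking; so the argument should go through once the local model is set up, but the diagram chase identifying $\tsi$ on the cone with the cosection on $\sO_X^{\oplus n}$ needs to be done carefully. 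An alternative, cleaner route—if one wants to avoid re-deriving the local computation—is to deduce this from Theorem \ref{1thmain1} applied to $\id:X\to X$ viewed through the triangle relating $E_X$, $E_{X/Y}$ and $f^*E_Y$, extracting the $h^1/h^0(E_{X/Y}^\vee)$-component of the support statement \eqref{1501251}; I would check whether that shortcut actually yields the full claim or only its restriction over suitable opens, and fall back on the direct argument above if not.
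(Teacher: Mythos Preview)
Your direct test-curve argument should work, but the paper's proof is a two-line functoriality argument that you overlooked. The key observation is that the distinguished triangle \eqref{16} induces, via $h^1/h^0$, a natural map of cone stacks $\fC_{X/Y}\to \fC_X$ fitting into a commutative square
\[
\begin{CD}
\fC_{X/Y} @>\subset>> \cK @>\tsi>> \sO_X\\
@VVV @VVV @VV=V\\
\fC_{X} @>\subset>> \cE @>\sigma>> \sO_X
\end{CD}
\]
so that $\tsi$ on $\fC_{X/Y}$ factors through $\sigma$ on its image in $\fC_X$. The absolute cone reduction from \cite{KiemLi} gives $\mathrm{Supp}\,\fC_X\subset \cE(\sigma)$; since $X(\sigma)\subset X(\tsi)$, on the locus where $\tsi$ is surjective $\sigma$ is already surjective, and commutativity yields $\fC_{X/Y}\subset\ker\tsi$ there. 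That is the entire proof.

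Your approach redoes the local deformation-to-the-normal-cone computation in the relative setting, which is correct but unnecessary: the relative cone maps to the absolute cone, and the relative cosection is pulled back from the absolute one, so the absolute result transfers for free. Your ``alternative cleaner route'' at the end hints at this idea but misidentifies the tool---you do not need Theorem \ref{1thmain1} or any statement about $\id:X\to X$; you only need the map $\fC_{X/Y}\to\fC_X$ and the definition of $\tsi$ as the composite $\Ob_{X/Y}\to\Ob_X\mapright{\sigma}\sO_X$. What your longer route buys is an independent verification and explicit local control of the cone; what the paper's route buys is brevity and the insight that no new cone reduction is needed beyond the absolute case.
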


\begin{proof} 
We apply the functoriality of the $h^1/h^0$ construction to \eqref{16}
to obtain the commutative diagram
\beq\label{h1h0}
\begin{CD}
 \fC_{{X}/ {Y}}@>\subset>>h^1/h^0(\TT_{{X}/ {Y}})@>\subset>>h^1/h^0(E\dual_{{X}/ {Y}})=\cK @>\tsi>>\sO_X\\
@VVV @VVV@VVV @V=VV\\
 \fC_{{X}{}}@>\subset>>h^1/h^0(\TT_{{X}}) @>\subset>> h^1/h^0(E\dual_{{X}})=:\cE @>\sigma>> \sO_X\\
\end{CD}
\eeq
Like before, we have
$\mathrm{Supp} \,\fC_{{X}} \sub \ker[h^1/h^0(E_{X}\dual)\lra\sO_X]=:\cE(\sigma)$.
Since $\tsi$ is induced from $\sigma=\sigma_X$, the lemma follows.
\end{proof}

We now define the cosection localized virtual pullback. We let
$Y'\to Y$ be a morphism of stacks where $Y'$ has stratification by global quotients. Form the Cartesian product
\[\xymatrix{
X'\ar[r]^p \ar[d]_{f'} & X\ar[d]^f\\
Y'\ar[r]^q& Y.
}\] 
and let $\hat\sigma: p\sta\cK\to \sO_{X'}$
be the pullback of $\tsi:\cK\to \sO_X$. Then the vanishing locus of $\hat\sigma$ is $$X'(\ti\sigma):=X(\tsi)\times_X X'$$
and $$p^*\cK(\hat\sigma)=\ker[\hat\sigma:p^*\cK\to \sO_{X'}]=\cK(\tsi)\times_XX'.$$

Consider the composite
$$\iota: (\fC_{X'/Y'})_\redd \sub (\fC_{{X}/ {Y}}\times_{ {X}} {X'})_\redd \sub \cK(\tsi)\times_{ {X}} {X'}
= p^*\cK(\hat\sigma),
$$
where the first inclusion follows from the definition of $X'$, and the second inclusion follows from Lemma
\ref{deg-vanishing}. 
\begin{defi}\label{1clvp} The \emph{cosection localized virtual pullback} is defined by
$$f_\sigma^! : A\lsta Y'\mapright{\ep}A\lsta \fC_{X'/Y'}\mapright{\iota\lsta}
A\lsta(p\sta\cK(\hat\sigma))\mapright{0^!_{p^*\cK,\loc}} A_{\ast}(X'(\tsi)),
$$
where $\ep$ is defined on the level of cycles by  $\ep(\sum n_i[V_i]) =\sum n_i[\fC_{V_i\times_{Y'}X'/V_i}]$.  
 \end{defi}
Note that the way that \cite[Theorem 2.31]{Mano1} was applied to \cite[Construction 3.6]{Mano1} can also be
applied  here to conclude that $\ep$ descends to maps between Chow groups.

We have the following virtual pullback formula.
\begin{theo}\label{1thmain2} Let $f:X\to Y$ be a virtually smooth morphism, $\sigma:\Ob_X\to\sO_X$ be a cosection and 
$\tsi:\Ob_{X/Y}\to Ob_X\mapright{\sigma} \sO_X$ be the induced cosection. 
Let $\jmath: X(\sigma)\to X(\tsi)$ be the inclusion of zero loci of $\sigma$ and $\tsi$. Then we have
$$f_\sigma^![Y]\virt=\jmath\lsta [X]\virtloc\in A\lsta( X(\tsi)).
$$
\end{theo}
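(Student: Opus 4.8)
The plan is to mimic the proof of Theorem \ref{1thmain1}, replacing the role of the cosection $\sigma_Y$ on $Y$ by the cosection $\sigma_X$ on $X$, and the deformation to the normal cone of $Y$ by the deformation to the normal cone of $X$ over $Y$. First I would set $Y'=Y$ and $X'=X$, so $f_\sigma^!$ is the composite $A_*(Y)\to A_*(\fC_{X/Y})\to A_*(\cK(\tsi))\to A_*(X(\tsi))$ given in Definition \ref{1clvp}. Applying this to $[Y]\virt=0^!_{\fE_Y}[\fC_Y]$, one computes $f_\sigma^![Y]\virt$ as $0^!_{\cK,\loc}$ applied to $[\fC_{X\times_Y\fC_Y/\fC_Y}]$, where I use that $\ep$ commutes with the Gysin map $0^!_{\fE_Y}$ (the usual double-cone compatibility, as in \cite[Lemma 4.7]{Mano1}). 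On the other side, $[X]\virtloc=0^!_{\cE,\loc}[\fC_X]$, and $\jmath_*$ intertwines $0^!_{\cK,\loc}$ with $0^!_{\cE,\loc}$ via the commutative square \eqref{h1h0} together with the compatibility of localized Gysin maps under the bundle-stack inclusion $\cK(\tsi)\hookrightarrow\cE(\sigma)$ (this is the cosection-localized analogue of functoriality of Gysin maps, already implicit in \cite[\S2]{KiemLi}). So the whole statement reduces to identifying two cycle classes supported in $\cK(\tsi)$, after pushing forward along $\cK(\tsi)\to\cE(\sigma)$.

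The second and main step is the cone comparison: I would introduce the (double) deformation space $M^0_X\to\PP^1$ interpolating between $\fC_X$ and the cone $\fC_{X\times_Y\fC_Y/\fC_Y}$ realized inside the cone $c(u')$ of the appropriate morphism built from $\varphi:f^*E_Y\to E_X$ and the homogeneous coordinates of $\PP^1$, exactly parallel to the morphism $u$ in the proof of Theorem \ref{1thmain1} but now with the cosection maps coming from $\sigma=\sigma_X$ (and its restriction to $\cK=h^1/h^0(E_{X/Y}^\vee)$) rather than from $\sigma_Y$. Over $\PP^1-\{(0:1)\}$ the family is $\fC_X\times(\PP^1-\{(0:1)\})$, and the cone reduction \cite[\S4]{KiemLi} gives $\mathrm{Supp}\,\fC_X\subset\cE(\sigma)$, so the support condition holds there; over the special fiber $(0:1)$ the cone degenerates into $f^*h^1/h^0(E_Y^\vee)\times_X\cK$ and the relevant homomorphism becomes $(0,\tsi)$, whose kernel is $f^*h^1/h^0(E_Y^\vee)\times_X\cK(\tsi)$. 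Thus the content is that every irreducible component $A$ of the open piece $D=M^0_X\times_{\PP^1}(\PP^1-\{(1:0)\})$ lying over $X\times\{(0:1)\}$ has support in $f^*h^1/h^0(E_Y^\vee)\times_X\cK(\tsi)$; granting this, the rational equivalence on $M^0_X$ yields, after the Gysin map for the $\PP^1$-direction and the localized Gysin map $0^!_{\cK,\loc}$, the identity $f_\sigma^![Y]\virt=0^!_{\cK,\loc}[\fC_X]$ inside $A_*(X(\tsi))$, which pushes to $[X]\virt$ in $A_*(X)$ and is by definition $\jmath_*[X]\virtloc$ in $A_*(X(\tsi))$.

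For that support statement I would run the same local double-deformation-space argument as in Theorem \ref{1thmain1}: choose local embeddings $X\subset V$, $Y\subset W$ with $g:V\to W$ smooth, $X=\zero(\gamma\circ g,\eta)$, $Y=\zero(\gamma)$, and $E_{X/Y}=[\sO_X^{\oplus n}\xrightarrow{d\eta}\Omega_{V/W}|_X]$, take a test curve $\rho:(\Delta,0)\to\bar\Gamma$ landing at a general point $a\in A$, and read off the limit vectors $v_1,v_2$ as in \eqref{00}. The difference from Theorem \ref{1thmain1} is that now the cosection in play is $\sigma_X$, which factors through $Ob_X\to\sO_X$; since $\tsi$ is the composite $Ob_{X/Y}\to Ob_X\xrightarrow{\sigma}\sO_X$, copying Lemma 4.4 and Corollary 4.5 of \cite{KiemLi} forces the limit of $t_\Delta^{-1}\cdot\eta\circ\rho_V$, i.e.\ the component $v_2$ representing a point of $\cK=h^1/h^0(E_{X/Y}^\vee)$, to lie in $\ker[\tsi:\cK\to\sO_X]$, while $v_1$ is unconstrained in $f^*h^1/h^0(E_Y^\vee)$. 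The main obstacle I anticipate is precisely this local computation of the limit cone: one must be careful that the cone reduction lemmas of \cite{KiemLi}, which are phrased for $\fC_X$ and the absolute cosection, apply verbatim to the relative factor $\cK$ and the induced cosection $\tsi$ in the degenerate fiber; once that bookkeeping is set up, the rest is a formal chase through Definition \ref{1clvp}, the diagram \eqref{h1h0}, and the rational equivalence on the deformation space, entirely parallel to \cite{Mano1} and to the proof of Theorem \ref{1thmain1} already given.
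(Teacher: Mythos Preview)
Your strategy is essentially the one the paper uses: reduce to a support statement for the family cone $\fC_{X\times\PP^1/M_Y^0}$ inside $h^1/h^0(c(u)^\vee)$ via the double deformation space, and verify that over the special fiber $(0:1)$ the relevant homomorphism is $(0,\tsi)$ so that only the $v_2$-component is constrained (landing in $\ker\tsi$) while $v_1$ is free; the paper packages the first reduction as a separate Lemma~\ref{1100} and the second as Proposition~\ref{vanishing2}, but the content is the same.

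Two notational points you should tighten. First, the expression ``$0^!_{\cK,\loc}[\fC_X]$'' does not type-check: $\fC_X$ sits in $\cE=h^1/h^0(E_X^\vee)$, not in $\cK$. What you actually need is the localized Gysin map for the interpolating bundle stack $h^1/h^0(c(u)^\vee)$, which specializes to $\cE$ over the open locus and to $f^*\fE_Y\times_X\cK$ at $(0:1)$; the output at the general fiber is $[X]\virtloc\in A_*(X(\sigma))$, which then gets pushed to $A_*(X(\tsi))$ by $\jmath_*$. Second, your claim that ``$\jmath_*$ intertwines $0^!_{\cK,\loc}$ with $0^!_{\cE,\loc}$ via \eqref{h1h0}'' is not a free-standing compatibility one can quote: $\cK$ and $\cE$ have different ranks, so there is no direct functoriality of Gysin maps here. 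The comparison really does go through the deformation-space rational equivalence, exactly as you describe in your second and third paragraphs; just drop the suggestion in the first paragraph that there is a shortcut.
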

The proof is completely parallel to that of Theorem \ref{1thmain1}, so we only provide a sketch. 
We need the following analogue of Lemma \ref{110}.
\begin{lemm}\label{1100}
Let $f:X\to Y$ be a morphism of \DM stacks and ${\cK}$ be a vector bundle stack on $X$ such that $\fC_{X/Y}\subset {\cK}$. Let $\cF$ be a vector bundle stack on $Y$ with the zero section $0_\cF:Y\to \cF$.
Let $U\subset X$ be open and $\tsi:\cK|_U\to \CC_U$ be a surjective map of vector bundle stacks. Let $X(\tsi)=X-U$.
Then 
for each irreducible $B\subset \cF$,
\[ f_\sigma^!0^!_{\cF}[B]=0_{f^*\cF\oplus {\cK},\loc}^![\fC_{X\times_\cF B/B}] \quad\text{in } A_*(X(\tsi)) 
\]
where $0_{f^*\cF\oplus {\cK},\loc}^!$ denotes the localized Gysin map with respect to the cosection
$(0,\tsi):f^*\cF\oplus {\cK}|_{f^{-1}(U)}\to \CC_{f^{-1}(U)}$.
\end{lemm}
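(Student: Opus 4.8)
The plan is to mimic the proof of Lemma~\ref{110} nearly verbatim, substituting the roles of the two vector bundle stacks. Whereas in Lemma~\ref{110} the cosection lives on $\fE$ (downstairs, on $Y$) and one pulls back along $f$, here the cosection lives on $\cK$ (upstairs, on $X$) and is the relative one $\tsi$, while the auxiliary stack $\cF$ carries no cosection. First I would dispose of the trivial case: if $\sigma=0$ (equivalently $\tsi=0$) at the generic point of the relevant component, both sides are computed by ordinary Gysin maps and the claim reduces to Manolache's \cite[Lemma~4.7]{Mano1}; likewise if $B\subseteq \cF|_{Y(\text{undefined})}$ — but note $\cF$ has no distinguished locus here, so the only genuinely new input is the cosection $\tsi$ on $\cK$, which by \cite[Example~2.37]{Mano1} gives $\fC_{X/\cF}=f^*\cF\times_X\fC_{X/Y}\subset f^*\cF\oplus\cK$, and the image of this cone in $f^*\cF\oplus\cK$ is supported in $\ker(0,\tsi)$ precisely because $\mathrm{Supp}\,\fC_{X/Y}\subset\cK(\tsi)$ (Lemma~\ref{deg-vanishing}); this is what makes $0^!_{f^*\cF\oplus\cK,\loc}$ applicable.

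Next I would reduce, by shrinking and by the usual projective-birational-model argument from \cite[\S2]{KiemLi}, to the situation where one has a projective variety $Z$ with a generically finite proper $\rho:Z\to X$ (note: now the model is over $X$, not over $Y$, since the cosection is on $X$), a Cartier divisor $D\subset Z$ dominating $X(\tsi)$, and an extension $\rho^*\tsi$ to a surjection $\tilde\sigma:\rho^*\cK\to\sO_Z(D)$; set $\tilde\cK=\ker\tilde\sigma$. Using that $\tilde\cK$ is a bundle stack one writes $[\tilde B]=\sum c_i[\tilde\cK|_{Z_i}]$ and reduces further, exactly as in the proof of Lemma~\ref{110}, to the model case $\tilde B=\tilde\cK$, so that $0^!_{\cK,\loc}$ applied to the relevant class becomes $\tfrac1k(\rho_\sigma)_*[D]$ (or its pullback along $f$). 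Then, via the Cartesian diagrams over $f:X\to Y$ and the fact that virtual pullbacks commute with proper pushforward \cite[Theorem~4.1(i)]{Mano1}, one transports the computation through $f$ and is left with comparing two intersection-theoretic expressions on the divisor $D$ and its preimage.

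The heart of the argument is then the cone identity: one must show $[\fC_{D'/D}] = [\fC_{X\times_\cF B/B}]\cdot D'$ (appropriately interpreted), which follows from Vistoli's rational equivalence \cite{Vistoli} together with $\fC_{D/Z}\cong\sO_Z(D)|_D$ and $\fC_{X\times_\cF\tilde\cK/\tilde\cK}=\fC_{Z'/Z}\times_{Z'}{f'}^*\tilde\cK$ from \cite[Example~2.37]{Mano1}, and the commutativity of Gysin maps — this is the same chain of equalities displayed in the proof of Lemma~\ref{110}, with $L=\sO_Z(D)$. Combining this with $f_\sigma^!\,0^!_\cF[B]=\tfrac1k(\rho'_\sigma)_*0^!_{\cK|_{D'}}[\fC_{D'/D}]$ on one side and the localized-Gysin formula $0^!_{f^*\cF\oplus\cK,\loc}[\fC_{X\times_\cF B/B}]=\tfrac1k(\rho'_\sigma)_*0^!_{\cK|_{D'}}[\fC_{D'/D}]$ on the other yields the equality.

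The main obstacle I anticipate is purely bookkeeping rather than conceptual: one must be careful that the localized Gysin map $0^!_{f^*\cF\oplus\cK,\loc}$ is defined with respect to the cosection $(0,\tsi)$ — i.e.\ trivial on the $f^*\cF$ factor — and that the birational model $Z\to X$ resolving $\tsi$ is compatible (after base change $Z'=X\times_Y Z$, though here more naturally $Z$ already lives over $X$) with the extension $\tilde\sigma$ of $\rho^*\tsi$; concretely one must verify that pulling the divisor $D\subset Z$ back to $Z'={f'}^{-1}$ of the model and intersecting commutes with the $f$-pushforward in the same way it did in Lemma~\ref{110}. Since $\cF$ carries the zero cosection, the factor $f^*\cF$ simply comes along for the ride — it never interacts with $D$ — so the only subtlety is confirming that the identity $\fC_{X/\cF}=f^*\cF\times_X\fC_{X/Y}$ survives the passage to $Z'$, which is again \cite[Example~2.37]{Mano1}. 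Once these compatibilities are in place the proof is a transcription of that of Lemma~\ref{110}.
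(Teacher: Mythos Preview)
Your plan is a substantial overcomplication, and the paper's proof is in fact a three-line direct computation that exploits exactly the observation you yourself make in passing: since the cosection on $f^*\cF\oplus\cK$ is $(0,\tsi)$, the factor $\cF$ ``comes along for the ride.'' Concretely, because $\cF$ is an honest vector bundle stack on $Y$ with \emph{no} cosection, every class in $A_*(\cF)$ is a combination of fiber classes $[\cF|_{\tilde B}]$ for integral $\tilde B\subset Y$; so one may assume $B=\cF|_{\tilde B}$ and $0^!_\cF[B]=[\tilde B]$. Then the left side is, by Definition~\ref{1clvp},
\[
f_\sigma^!0^!_\cF[B]=f_\sigma^![\tilde B]=0^!_{\cK,\loc}[\fC_{\tilde B\times_YX/\tilde B}]
=0^!_{f^*\cF\oplus\cK,\loc}\bigl[f^*\cF\times_X\fC_{\tilde B\times_YX/\tilde B}\bigr],
\]
the last equality because adding a bundle-stack summand with trivial cosection does not change the localized Gysin map. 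On the other side, since $B=\cF\times_Y\tilde B$ one has $X\times_\cF B=X\times_Y\tilde B$, and \cite[Example~2.37]{Mano1} gives $\fC_{X\times_\cF B/B}=f^*\cF\times_X\fC_{\tilde B\times_YX/\tilde B}$. That is the whole proof.

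By contrast, your outline imports the birational-model machinery of Lemma~\ref{110}, which is needed there only because $B$ lives in the \emph{kernel cone} $\fE(\sigma)$ rather than in a bundle stack. Here $B\subset\cF$ already sits in a bundle stack carrying the zero cosection, so no model $Z$, no divisor $D$, and no Vistoli equivalence are required. Your attempt to place $Z\to X$ (since $\tsi$ lives on $X$) and then simultaneously form $Z'=X\times_YZ$ is where the transcription tangles: the Cartesian squares of Lemma~\ref{110} do not have an obvious analogue once the model is already upstairs, and your cone identity $[\fC_{D'/D}]=[\fC_{X\times_\cF B/B}]\cdot D'$ is not well-posed as written. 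These are not fatal in principle---one could probably force a version through---but the point is that the entire apparatus is unnecessary once you realize the reduction $B=\cF|_{\tilde B}$ is available for free.
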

\begin{proof} 
We may assume that there is an irreducible $\tilde{B}\subset Y$ such that $B=\cF|_{\tilde{B}}=\cF\times_Y\tilde{B}$ and $0^!_{\cF}[B]=\tilde{B}$. The left side is  $$f_\sigma^!0^!_{\cF}[B]=f_\sigma^![\ti B]=0^!_{\cK,\loc} [\fC_{\ti B\times_YX/\ti B}]=0^!_{f^*\cF\oplus \cK,\loc} [f^*\cF\times_X \fC_{\ti B\times_YX/\ti B}].$$
Since $B=\cF\times_Y\tilde{B}$,  $f^*\cF\times_X \fC_{\ti B\times_YX/\ti B}=\fC_{B\times_\cF X/B}$. Hence
the lemma follows.
\end{proof}

Theorem \ref{1thmain2} follows from the following. 
\begin{prop}\label{vanishing2} 
Suppose $Y'=h^1/h^0(E_Y\dual)=\cF$ so that $p|_{X'(\tsi)}: X'(\tsi)\to X(\tsi)$ is a bundle stack
and that we have the Gysin map $0_{\cF}^! :A\lsta(X'(\tsi))\lra A_{\ast} (X(\tsi)).$
 Then we have
$$0_\cF^!f_\sigma^![\fC_{Y}]=\jmath\lsta [X]\virtloc\in A\lsta( X(\tsi)).
$$
\end{prop}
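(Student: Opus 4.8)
\textbf{Plan for the proof of Proposition \ref{vanishing2}.}
The strategy is to realize both sides of the claimed identity as cosection localized Gysin pushforwards of a single deformation cone over $\PP^1$, exactly as in the proof of Theorem \ref{1thmain1}, and then to check the one support inclusion that makes the argument go through. First I would set $\cF=h^1/h^0(E_Y\dual)$, so that $Y'=\cF$ and $X'=X\times_Y\cF=f^*\cF$, and unwind Definition \ref{1clvp}: $f_\sigma^![\fC_Y]$ is obtained by sending $[\fC_Y]\in A\lsta(\cF)$ via $\ep$ to $[\fC_{f^*\cF\times_\cF\fC_Y/\fC_Y}]$, then mapping into $p^*\cK(\hat\sigma)=\cK(\tsi)\times_XX'$, then applying $0^!_{p^*\cK,\loc}$. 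Composing with $0^!_\cF$ (which is legitimate since $p|_{X'(\tsi)}$ is a bundle stack) and using the commutativity of (ordinary, resp.\ localized) Gysin maps for the bundle stacks $\cF$ and $\cK$ over the appropriate base, the left side becomes $0^!_{f^*\cF\oplus\cK,\loc}$ applied to a cone inside $f^*\cF\oplus\cK$. On the other side, $[X]\virtloc=0^!_{\cE,\loc}[\fC_X]$ with $\cE=h^1/h^0(E_X\dual)$, and $\jmath\lsta$ records that $X(\sigma)\hookrightarrow X(\tsi)$; the relation between $\cE$ and $f^*\cF\oplus\cK$ is governed by the distinguished triangle \eqref{16}, precisely $\cE\cong h^1/h^0(c(u)\dual)$ for the morphism $u=(x_0\cdot\id,x_1\cdot\varphi)$ used in Theorem \ref{1thmain1}.

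Next I would run the double deformation space construction verbatim from the proof of Theorem \ref{1thmain1}: form $M^0_Y$, the deformation of $\{pt\}$ to $\fC_Y$, pull back along $X\times\PP^1\to M^0_Y$ to get the cone $\fC_{X\times\PP^1/M^0_Y}$ inside $h^1/h^0(c(u)\dual)$, and observe that over $\PP^1-\{(0:1)\}$ it restricts to $\fC_X\times(\PP^1-\{(0:1)\})$ (inside $\cE$) while over $(0:1)$ the bundle stack degenerates to $f^*h^1/h^0(E_Y\dual)\times_Xh^1/h^0(E_{X/Y}\dual)=\cF\times_X\cK$ and the cone restricts to $f^*\cF\times_X\fC_{X/Y}$. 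Applying the rational-equivalence-over-$\PP^1$ argument (the cone over $\PP^1$ gives a rational equivalence between its two fibers) together with Lemma \ref{1100} to identify the $(0:1)$-fiber contribution as $0^!_{f^*\cF\oplus\cK,\loc}[\fC_{X\times_\cF\fC_Y/\fC_Y}]=0^!_\cF f_\sigma^![\fC_Y]$, and the fact that $0^!_{\cE,\loc}[\fC_X]=[X]\virtloc$ pushes forward from $X(\sigma)$ via $\jmath$, reduces the proposition to the single support statement: every irreducible component $A$ of $\fC_{X\times\PP^1/M^0_Y}$ lying over $X\times\{(0:1)\}$ has support contained in
$$
f^*h^1/h^0(E_Y\dual)\times_X\ker\!\bl\tsi:h^1/h^0(E_{X/Y}\dual)\to\sO_X\br.
$$
Note the difference from \eqref{1501251}: here the cosection lives on the $E_{X/Y}$-factor (it is $\tsi$, induced from $\sigma=\sigma_X$), not on the $E_Y$-factor, because now $\sigma$ is a cosection on $X$, not pulled back from $Y$.

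For this support inclusion I would reuse the local presentation from the proof of Theorem \ref{1thmain1}: affine $X\subset V$, $Y\subset W$, a smooth $g:V\to W$, with $E_Y=[\sO_Y^{\oplus m}\to\Omega_W|_Y]$ and $E_{X/Y}=[\sO_X^{\oplus n}\to\Omega_{V/W}|_X]$, and the graph $\bar\Gamma$ of $(v,t,s)\mapsto((ts)^{-1}\gamma g(v),t^{-1}\eta(v))$ whose $t=0$ fiber, modulo $pr_V^*T_V$, is $D=\fC_{X\times\PP^1/M^0_Y}$ near $(0:1)$. Picking an arc $\rho:\Delta\to\bar\Gamma$ through a general point $a$ of $A$, one gets the limit pair $(v_1,v_2)$ as in \eqref{00}, with $v_1=\lim(t_\Delta s_\Delta)^{-1}\gamma g\rho_V$ and $v_2=\lim t_\Delta^{-1}\eta\rho_V$. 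Since now $\Ob_{X/Y}$ carries the cosection $\tsi$ pulled back from $\sigma_X$ on $\Ob_X$, and $\sO_X^{\oplus n}\twoheadrightarrow\Ob_{X/Y}\mapright{\tsi}\sO_X$, the same argument as in Lemma 4.4 and Corollary 4.5 of \cite{KiemLi} — applied to the $\eta$-component rather than the $\gamma$-component — shows $v_2$ lies in $\ker[\tsi:h^1/h^0(E_{X/Y}\dual)\to\sO_X]$, while $v_1$ lies in $h^1/h^0(E_Y\dual)$ with no constraint. I expect the main obstacle to be exactly the bookkeeping of which factor the cosection sits on and verifying that the chain of Gysin-map commutativities (ordinary $0^!_\cF$ composed with localized $0^!_{p^*\cK,\loc}$, versus the single localized $0^!_{f^*\cF\oplus\cK,\loc}$) is legitimate — i.e.\ that the localization is only needed in the $\cK$-direction, where the cosection actually lives, so that $0^!_\cF$ can be taken in the ordinary (non-localized) sense; this is what Lemma \ref{1100} is designed to supply, and once its hypotheses are matched to the $(0:1)$-fiber the rest is the deformation argument transcribed from Theorem \ref{1thmain1}.
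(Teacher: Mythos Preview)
Your proposal is correct and follows essentially the same route as the paper: the double deformation space of Theorem \ref{1thmain1}, the reduction to a support inclusion over $(0:1)$, and the arc argument with the local presentation, with the one twist (which you correctly flag) that now it is $v_2$ that must lie in $\ker\tsi$ rather than $v_1$ in $\ker(f^*\sigma_Y)$. The paper's mechanism for this last step is slightly more specific than yours --- it uses $\lim_{\delta\to 0}s_\Delta=0$ to show that the image $v_2'$ of $v_2$ under $h^1/h^0(E_{X/Y}^\vee)\to h^1/h^0(E_X^\vee)$ actually lands in $\fC_X$, then invokes the absolute cone reduction for $\sigma$ --- so when you execute ``apply KiemLi to the $\eta$-component'' be aware that $\rho_V$ is a curve in $V$, not in $f^{-1}(Y)$, and the passage through $\fC_X$ (rather than $\fC_{X/Y}$) is what makes the argument go.
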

\begin{proof}[Proof of Theorem \ref{1thmain2}] By Lemma \ref{1100} and Proposition \ref{vanishing2},
$$f_\sigma^![Y]\virt=f_\sigma^!0_\cF^![\fC_Y]=
0_{f^*\cF\oplus {\cK},\loc}^![\fC_{X\times_\cF \fC_Y/\fC_Y}]
=0_\cF^!f_\sigma^![\fC_Y]=\jmath\lsta [X]\virtloc.$$
\end{proof}
\begin{proof}[Proof of Proposition \ref{vanishing2}]
The proof is almost identical to that of Theorem \ref{1thmain1}, so we only point out the difference. The construction of the double deformation space and the cone $c(u)$ is identical and we have a commutative diagram
\small $$\xymatrix{
h^1/h^0(c(u)^\vee)\ar[r]\ar[d] & p^*f^*h^1/h^0(E_Y^\vee)\oplus p^*h^1/h^0(E_X^\vee) \ar[r]\ar[d]^{(0,\sigma)} & p^*f^*h^1/h^0(E_Y^\vee)\otimes q^*\sO_{\PP^1}(1)\\
q^*\sO_{\PP^1}\ar[r]^= & q^*\sO_{\PP^1}  & 
}$$\normalsize
where the vertical arrows are defined by $\sigma$.
Again it suffices to show 
\beq\label{claim}
\mathrm{Supp}\, \fC_{X\times\Po/M_Y^\circ}\sub \ker[h^1/h^0(c(u)\dual)\to q\sta\sO_{\Po}].
\eeq
Now the proof continues exactly the same as the proof of Theorem \ref{1thmain1}.
By the cone reduction in \cite{KiemLi}, \eqref{claim} holds over the open $\PP^1-\{(0:1)\}$.
To prove \eqref{claim} over the point $(0:1)$, we consider a general point $a$ in any irreducible component $A$ of $D$ 
lying over $(0:1)$ and use the local construction of the double deformation space. 
After choosing a morphism $\rho$ from a smooth pointed curve $(\Delta,0)$ with $\rho(0)$ representing $a$, 
one finds that we only have to check that $v_2$ represents a point in 
$$\ker \left(\tsi:h^1/h^0(E^\vee_{X/Y})\to h^1/h^0(E^\vee_X)\mapright{\sigma} \sO_X\right).
$$ 
Again this follows from the arguments in the proofs of Lemma 4.4 and Corollary 4.5 in \cite{KiemLi}:
When $v_2\ne 0$, because $\lim_{\delta\to 0}
s_\Delta=0$, the image $v'_2$ of $v_2$ in $h^1/h^0( E_{X}\dual)$ under the tautological
$$h^1/h^0( E_{X/Y}\dual)\lra h^1/h^0( E_{X}\dual)
$$
lies in $\fC_X$. Using the cosection $\sigma$, we see that $v'_2\in  \ker[h^1/h^0( E_{X}\dual)\to \sO_X]$.
Because $\tilde\sigma$ is induced by $\sigma$, we obtain \eqref{claim}.
This proves the proposition.
\end{proof}

\begin{rema} 
When $f:X\to Y$ is a morphism over a smooth Artin stack 
$\cS$, sometimes it is more convenient to work with relative obstruction theories, say with $\bbL_X$ replaced by 
$\bbL_{X/\cS}$, $\bbL_Y$ by $\bbL_{Y/\cS}$ etc. 
It is straightforward to see that all the statements and proofs in this section hold in this case.

Another useful situation is when $Y$ is only assumed to be an Artin stack with $Y\to\cS$ assumed to be Deligne-Mumford.
Then Proposition 2.11 holds in this case with obstruction theories replaced by relative (to $\cS$) obstruction theories.
\end{rema}

\section{Torus localization for cosection localized virtual cycles}\label{S:Torus}
In this section, we prove the torus localization formula for cosection localized virtual cycles (Theorem \ref{2thmain}). We do not assume the existence of an equivariant global embedding or a global resolution of the perfect obstruction theory. When the cosection is trivial, our argument gives a new proof of the torus localization theorem in \cite{GrPa} without these assumptions.

Let $X$ be a Deligne-Mumford stack acted on by a torus $T=\CC^*$. Let $F$ denote the $T$-fixed locus, i.e. locally if $X=\mathrm{Spec}(A)$, then $F=\mathrm{Spec}(A/\langle A^{\text{mv}}\rangle)$ where $A^{\mathrm{mv}}$ denotes the ideal generated by $T$-eigenfunctions with nontrivial characters. 
Let $$\imath:F\lra X$$ denote the inclusion map.

Let $D([X/T])$ be the derived category of sheaves of $T$-equivariant $\sO_X$-modules on $X$. It is the same as the ordinary derived category of sheaves of $\sO_X$-modules except that all sheaves are $T$-equivariant and all morphisms or arrows are $T$-equivariant. The action of $T$ on $X$ gives the equivariant cotangent complex $\bbL_X\in D([X/T])$.

\begin{defi} A \emph{$T$-equivariant perfect obstruction theory} consists of an object $E\in D([X/T])$ and a morphism $$\phi:E \to \bbL_X$$ 
in $D([X/T])$ which is a perfect obstruction theory on $X$.
\end{defi}

If $A$ is a $T$-equivariant sheaf of $\sO_F$-modules on $F$, we let $A^{\mathrm{fix}}$ denote the sheaf of $T$-fixed submodules and $A^{\mathrm{mv}}$ denote the subsheaf generated by $T$-eigensections with nontrivial characters. 
Given $E\in D([X/T])$, $\bar E:=E|_F$ is a complex of $T$-equivariant sheaves on $F$,
thus we can decompose $\bar E=\bar E^{\mathrm{fix}}\oplus \bar E^{\mathrm{mv}}$ into the fixed and moving parts. 
A $T$-equivariant chain map $\psi:\bar E\to \tilde E$ to an $\tilde E\in D([F/T])$ preserves such decompositions to give us 
$\psi^{\mathrm{fix}}:\bar E^{\mathrm{fix}}\to \tilde E^{\mathrm{fix}}$ and 
$\psi^{\mathrm{mv}}:\bar E^{\mathrm{mv}}\to \tilde E^{\mathrm{mv}}$. If $\psi$ is a quasi-isomorphism, so are $\psi^{\mathrm{fix}}$ and $\psi^{\mathrm{mv}}$. Therefore a $T$-equivariant perfect obstruction theory $\phi:E\to \bbL_X$ induces morphisms in $D([F/T])$
$$\phi^{\mathrm{fix}}:E|_F^{\mathrm{fix}}\lra \bbL_X|_F^{\mathrm{fix}}\and  \phi^{\mathrm{mv}}:E|_F^{\mathrm{mv}}\lra \bbL_X|_F^{\mathrm{mv}}.$$ 
\begin{lemm} Let the notation be as above.
The composition $\phi_F:E|_F^{\mathrm{fix}}\to \bbL_X|_F^{\mathrm{fix}}\to \bbL_F$ of $\phi^{\mathrm{fix}}$ and the natural morphism $\bbL_X|_F^{\mathrm{fix}}\to \bbL_F$ is a perfect obstruction theory of $F$.
\end{lemm}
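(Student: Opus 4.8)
The plan is to reduce the claim to the standard fact that the fixed part of an equivariant perfect obstruction theory on $X$, restricted to $F$, yields a perfect obstruction theory on $F$ (this is the content of \cite[Proposition 1]{GrPa}, reproven here without the embedding assumption since we only restrict to $F$). First I would verify that $\phi_F : E|_F^{\mathrm{fix}} \to \bbL_F$ is a morphism in $D([F/T])$ whose cone has cohomology concentrated in degrees $\le -2$; equivalently, that $h^0(\phi_F)$ is an isomorphism and $h^{-1}(\phi_F)$ is surjective. The key input is the triangle of equivariant cotangent complexes for $F \hookrightarrow X$ restricted to $F$,
\[
\bbL_{X/F}|_F[-1] \lra \bbL_X|_F \lra \bbL_F \lra \bbL_{X/F}|_F,
\]
together with the observation that $N_{F/X}$ has purely moving weights (by the very definition of $F$ as the fixed locus, the conormal directions carry nontrivial characters), so that $h^0(\bbL_{X/F}|_F)$ and $h^{-1}(\bbL_{X/F}|_F)$ are entirely in the moving part. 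Taking fixed parts of the triangle then gives $\bbL_X|_F^{\mathrm{fix}} \to \bbL_F$ an isomorphism on $h^0$ and a surjection on $h^{-1}$ (in fact $\bbL_F = \bbL_F^{\mathrm{fix}}$ since $T$ acts trivially on $F$).

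Next I would combine this with $\phi^{\mathrm{fix}}$. Since $\phi : E \to \bbL_X$ is a perfect obstruction theory, $h^0(\phi)$ is an isomorphism and $h^{-1}(\phi)$ is surjective; restricting to $F$ and passing to fixed parts — which is exact, as noted in the paragraph preceding the lemma, because the fixed/moving decomposition of $T$-equivariant sheaves on $F$ is functorial and exact — shows $h^0(\phi^{\mathrm{fix}}: E|_F^{\mathrm{fix}} \to \bbL_X|_F^{\mathrm{fix}})$ is an isomorphism and $h^{-1}(\phi^{\mathrm{fix}})$ is surjective. Composing with $\bbL_X|_F^{\mathrm{fix}} \to \bbL_F$ and using the long exact cohomology sequences of the relevant triangles, one concludes $h^0(\phi_F)$ is an isomorphism and $h^{-1}(\phi_F)$ is surjective, i.e. $\phi_F$ is an obstruction theory. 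Perfectness of amplitude $[-1,0]$ for $E|_F^{\mathrm{fix}}$ follows from perfectness of $E$: $E|_F$ is perfect of amplitude $[-1,0]$ by base change, and the fixed part of a perfect complex is a direct summand, hence perfect of the same amplitude.

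The main obstacle — and the reason this deserves a careful argument rather than a one-line citation — is justifying that the fixed-part functor is exact and commutes with the operations used, in the stacky, non-embeddable setting: one must work étale-locally on $F$, where $T = \CC^*$-equivariant $\sO_F$-modules decompose into weight spaces (the representation theory of $\CC^*$ being semisimple), and check that this decomposition glues and is compatible with cohomology of complexes and with the restriction of $\bbL_X$. I would phrase this via the decomposition of the abelian category of $T$-equivariant quasicoherent sheaves on $F$ (where $T$ acts trivially) into the product of weight-graded pieces, which is manifestly exact and respects all the derived-category operations, so that $(-)^{\mathrm{fix}}$ is an exact functor $D([F/T]) \to D(F)$ landing in the weight-zero summand. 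Once this is set up, everything else is the routine diagram chase sketched above; no global resolution or equivariant embedding of $X$ is needed because the entire argument takes place on $F$ after the single operation of restriction.
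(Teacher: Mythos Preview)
Your argument is correct but proceeds along a genuinely different line from the paper's proof. You verify the cohomological criterion for an obstruction theory (isomorphism on $h^0$, surjection on $h^{-1}$) by analyzing the fixed part of the transitivity triangle $\bbL_X|_F \to \bbL_F \to \bbL_{F/X}$ together with the restricted map $\phi|_F$. The paper instead invokes the equivalent deformation-theoretic criterion \cite[Theorem 4.5]{BeFa}: for a square-zero extension $\Delta\hookrightarrow\bar\Delta$ and $g:\Delta\to F$, it identifies the class $\phi_F^*\omega(g)$ with the $T$-invariant part of $\phi_X^*\omega(h)$ for $h=\imath\circ g$, and then uses reductivity of $T=\CC^*$ to produce a $T$-invariant extension $\bar h:\bar\Delta\to X$, which necessarily lands in $F$.

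What each buys: the paper's argument is self-contained and conceptual---it never touches $\bbL_{F/X}$ or the conormal sheaf, and the reductivity step makes transparent why no embedding of $X$ is required. Your route is closer in spirit to \cite{GrPa} and is arguably more elementary once the bookkeeping is in place, but it leans on the claim that the conormal sheaf $I/I^2$ has trivial fixed part; this is true (any weight-zero element of $I=\langle A^{\mathrm{mv}}\rangle$ is a sum of products of two elements of $I$, hence lies in $I^2$), yet deserves a sentence of justification beyond ``by the very definition of $F$.'' Two minor corrections: your $\bbL_{X/F}|_F$ should read $\bbL_{F/X}$ throughout, and when you say restriction preserves the obstruction-theory conditions you are implicitly using that the cone of $\phi$ lies in $D^{\le -2}$ and that $L\imath^*$ preserves this, which is fine but worth stating.
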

\begin{proof}
For any square zero extension $\Delta\to \bar\Delta$ of $k$-schemes with ideal sheaf $J$ and a morphism 
$g:\Delta\to F$, let $\omega(g)\in Ext^1(g^*\bbL_F, J)$ denote the composition 
$g^*\bbL_F\to \bbL_\Delta\to J[1]$ of  the natural morphisms $g^*\bbL_F\to \bbL_\Delta$ from $g$ and $\bbL_\Delta\to \bbL_{\Delta/\bar\Delta}\to \bbL^{\ge -1}_{\Delta/\bar\Delta}=J[1]$ from 
$\Delta\to \bar\Delta$. Let 
$$\phi_F^*\omega(g)\in \Ext^1(g^*E|_F^{\mathrm{fix}},J)
$$ 
be the image of $\omega(g)$ by the map $\Ext^1(g^*\bbL_F,J)\to \Ext^1(g^*E|_F^{\mathrm{fix}},J)$ induced from $\phi_F: E|_F^{\mathrm{fix}}\to \bbL_F$. Note that $\Ext^1(g^*E|_F^{\mathrm{fix}},J)$ is a $T$-module and 
$\phi_F^*\omega(g)$ is $T$-invariant, where $T$ acts on $\Delta$, $\bar\Delta$ and $J$ trivially. 

By \cite[Theorem 4.5]{BeFa}, it suffices to show the following claim: the obstruction assignment
$\phi_F^*(\omega(g))$ vanishes if and only if an extension $\bar g:\bar\Delta\to F$ of $g$ exists; 
and if $\phi_F^*(\omega(g))=0$, then the extensions form a torsor under $\Ext^0(g^*E|_F^{\mathrm{fix}},J)$.

Let $h:\Delta\to X$ be the composite of $g$ with the inclusion $F\sub X$.
Since $\phi:E\to \bbL_X$ is a perfect obstruction theory, $h$ extends to $\bar h:\bar\Delta\to X$ if
and only if $0=\phi_X^*\omega(h)\in \Ext^1(h^*E,J)$. Because $h$ factors through $F\sub X$ and
$J$ is an $\sO_{\Delta}$-module, 
$$\Ext^1(h^*E,J)=\Ext^1(g^*E|_F^{\mathrm{fix}},J)\oplus \Ext^1(g^*E|_F^{\mathrm{mv}},J),
$$
as $T$-module, and further $\phi_X^*\omega(h)$ is $T$-invariant. Since $\phi_X^*\omega(h)^T=
\phi_F^*\omega(g)$, we see that $\phi_F^*\omega(g)=0$ if and only if $h$ extends to $\bar h: \bar\Delta\to X$.
Because $T$ is reductive, a standard argument shows that we can find a $T$-invariant extension $\bar h$,
which necessarily factors through $F\sub X$. This proves that $\phi_F^*\omega(g)$ is an obstruction class to
extending $g$ to $\bar g:\bar \Delta\to F$. 

The part on the space of extensions $\bar g$ follows by the same argument.
\end{proof}

We let $E_F:=E|_F^{\mathrm{fix}}$ and $N\virt:=(E|_F^{\mathrm{mv}})^\vee$. 
Since $E$ is perfect, both the fixed part $E_F$ and the moving part $E|_F^{\mathrm{mv}}=(N\virt)^\vee$ of $E|_F$ are perfect. They fit into the following diagram of distinguished triangles:
\[\xymatrix{
E|_F\ar[r] \ar[d] & E_F\ar[r]\ar[d] & (N\virt)^\vee[1]\ar[r]\ar[d] &\\
\bbL_X|_F\ar[r] & \bbL_F\ar[r] & \bbL_{F/X}\ar[r] &
}\]
The morphism $E|_F\to E_F$ induces a homomorphism $$Ob_F=H^1(E_F^\vee)\lra H^1(E|_F^\vee)\cong H^1(E^\vee)|_F=Ob_X|_F.$$

Let $\sigma:Ob_X=H^1(E^\vee)\to \sO_X$ be a $T$-equivariant cosection.  Then $\sigma$ induces a $T$-invariant cosection
$$\sigma_F:Ob_F\lra Ob_X|_F\lra \sO_X|_F=\sO_F$$
and we have a cosection localized virtual cycle $[F]\virtloc$.

\def\fN{\mathfrak{N} }


\begin{defi} Suppose the virtual normal bundle $N\virt$ admits a global resolution $[N_0\to N_1]$ 
by locally free sheaves $N_0$ and $N_1$ over $F$.
We define the Euler class $e(N\virt)$ of $N\virt$ to be
$$e(N\virt)=e(N_0)/e(N_1) \in A^*(F)\otimes_\QQ\QQ[t,t^{-1}].
$$
\end{defi}

The goal of this section is to prove the following.
\begin{theo}\label{2thmain}
Let $X$ be a Deligne-Mumford stack acted on by $T$ and let $E\to \bbL_X$ be an equivariant 
perfect obstruction theory on $X$. Let $F$ be the $T$-fixed locus in $X$. Let $\sigma:Ob_X\to \sO_X$ be a 
$T$-equivariant cosection on $X$. Suppose there is a global resolution $N\virt\cong[N_0\to N_1]$, where 
$N_0$ and $N_1$ are locally free sheaves on $F$ (whose ranks may vary from component to component). 
Then we have 
$$ [X]\virt_\loc = \imath_* \frac{[F]\virtloc}{e(N\virt)}\in A\lsta^T X\otimes_{\QQ[t]}\QQ[t,t\upmo].
$$
Here the class $[F]\virtloc$ is defined with respect to the induced perfect obstruction theory $E_F$ and cosection $\sigma_F$.
\end{theo}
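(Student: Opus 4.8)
The plan is to follow the classical Graber--Pandharipande localization argument, but executed \emph{intrinsically} (avoiding the equivariant global embedding and the global resolution of $E$ itself) and \emph{cosection-localized} throughout, using the virtual pullback technology of \S\ref{S:Man} as the main engine. First I would set up the $T$-equivariant deformation-to-the-normal-cone space $M=M^\circ_{F/X}$ interpolating between $F$ and $X$; more precisely, I would realize the comparison of $[X]\virtloc$ and $\imath_*[F]\virtloc/e(N\virt)$ as an identity between the localized Gysin pushforwards of the intrinsic normal cones $\fC_X$ and $\fC_F$, transported to $F$ via the excess intersection with the virtual normal bundle. The key point of \S\ref{S:Torus} that makes this possible without the GraberPandharipande hypotheses is that the perfect obstruction theory restricts on $F$ to the distinguished triangle $E|_F\to E_F\to (N\virt)^\vee[1]$ whose associated bundle stacks fit into $\fC_F\hookrightarrow \fE_F=h^1/h^0(E_F^\vee)$ and whose moving part is exactly $N\virt$; hence the inclusion $F\hookrightarrow X$ is a ``virtually smooth after quotient by the moving directions'' situation and the machinery of Theorems \ref{1thmain1}--\ref{1thmain2} applies.

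The key steps, in order, are: (i) Show the $T$-fixed part of the intrinsic normal cone $\fC_X|_F$ equals $\fC_F$ (this is the cone-theoretic heart of GraberPandharipande, and it works \'etale-locally since it is a statement about normal cones of $T$-invariant subschemes; the cosection plays no role here). (ii) Using (i) and the cone-reduction step of \cite{KiemLi} applied $T$-equivariantly, establish that $\fC_X|_F$ is supported in $\fE_X(\sigma)|_F$ and that $\fC_F$ is supported in $\fE_F(\sigma_F)$, so all the localized Gysin maps below are defined; note $F(\sigma_F)=F\cap X(\sigma)$ by the argument of Lemma \ref{13}. (iii) Apply the localized virtual pullback formula (the proof of Theorem \ref{1thmain2}, whose Proposition \ref{vanishing2} is precisely the vanishing of supports on the double deformation space that we need) to the embedding $\imath:F\to X$, which is virtually smooth relative to the moving directions: the relative obstruction theory is $E_{F/X}=(N\virt)^\vee[1]$, perfect of amplitude $[-1,0]$ after the shift, and $\imath_\sigma^![X]\virtloc$ equals the pushforward of $[F]\virtloc$ capped against the excess class. (iv) Identify that excess class: since $N\virt=[N_0\to N_1]$ is globally resolved on $F$ by hypothesis, the localized Gysin map for the bundle stack $\fN\virt=h^1/h^0(N\virt)$ is, after inverting $t$, division by $e(N\virt)=e(N_0)/e(N_1)$ — here one uses that $e(N_0)$ and $e(N_1)$ become invertible in $A^*_T(F)\otimes_{\QQ[t]}\QQ[t,t^{-1}]$ because $T$ acts on $N_0,N_1$ with nonzero weights, exactly the standard self-intersection computation. (v) Invert $\imath_\sigma^!$ after tensoring with $\QQ[t,t^{-1}]$: the composition $\imath_{\sigma*}\circ(\text{cap with }e(N\virt))\circ\imath_\sigma^!$ is the identity on $A^T_*(X)\otimes\QQ[t,t^{-1}]$ by the usual argument that $\imath_*$ is an isomorphism after inverting $t$ (the support of $\ker/\mathrm{coker}$ is killed by some power of $t$), combined with the compatibility square \eqref{18} of $\imath_*$ with the localized pullback. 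Rearranging gives the stated formula.

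The main obstacle I expect is step (iii): justifying that the proof of Theorem \ref{1thmain2} / Proposition \ref{vanishing2} applies to $\imath:F\to X$, because strictly $\imath$ is \emph{not} virtually smooth in the sense of Definition \ref{11} — the relative obstruction theory $E_{F/X}\cong(N\virt)^\vee[1]$ has amplitude $[-1,0]$ only if $N\virt$ has amplitude $[0,1]$, i.e. $N\virt$ is itself perfect of amplitude $[0,1]$, which is true, so in fact $\imath$ \emph{is} virtually smooth with $d=-\mathrm{rk}\,N\virt$; but one must be careful that $d$ is negative and that all the degree bookkeeping with $A_{*+d}$ goes through, and that the double-deformation-space vanishing \eqref{claim} is checked with the cosection $\sigma$ on $X$ inducing $\sigma_F$ on $F$ (not the other way around, so one uses the variant in the final Remark of \S\ref{S:Man} or re-runs the argument of Proposition \ref{vanishing2} with the roles matched). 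A secondary technical point is that $F$ and $X$ need not admit global resolutions of $E_F$ or $E$, so throughout one works with the intrinsic cones and bundle stacks directly, invoking Vistoli's rational equivalence and \cite[Example 2.37]{Mano1} as in Lemma \ref{110} rather than choosing resolutions; the hypothesis that only $N\virt$ (on $F$) is globally resolved is used solely in step (iv) to make sense of $e(N\virt)$ as an honest class, and one should check that the final identity is independent of the chosen resolution of $N\virt$ by the usual Euler-class argument.
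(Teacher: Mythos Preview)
Your strategy is essentially the paper's: define a virtual pullback $\imath^!$ for the inclusion $\imath:F\hookrightarrow X$, compute $\imath^![X]\virtloc$ in terms of $[F]\virtloc$ and an Euler class, compute $\imath^!\imath_*$ as cap with another Euler class, then use that $\imath_*$ becomes an isomorphism after inverting $t$ (Kresch) to solve for $[X]\virtloc$. So the architecture matches.

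The execution in the paper is considerably leaner, and you should be aware of two places where your plan overcomplicates or slightly misidentifies things. First, the paper does \emph{not} run the double deformation space argument again for $\imath$; it simply takes the concrete resolution $N\virt\cong[N_0\to N_1]$, notes $\fC_{F/X}\subset N_0$, and defines $\imath^!$ as in Definition~\ref{14} using the honest vector bundle $N_0$. Then Lemma~\ref{110} (not Theorem~\ref{1thmain2}) gives directly $\imath^![X]\virtloc=0^!_{\fE|_F\oplus N_0,\loc}[\fC_F]$ via Vistoli's rational equivalence $[\fC_{F/\fC_X}]=[\fC_F]$, and since $\fE|_F=\fE_F\oplus[N_1/N_0]$ this equals $[F]\virtloc\cap e(N_1)$; your step~(i) about $(\fC_X|_F)^T=\fC_F$ is bypassed entirely. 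Second, the cosection here lives on the \emph{target} $X$ and is pulled back to $F$, so the relevant framework is that of \S\ref{ss2.1}/Theorem~\ref{1thmain1} (and its engine Lemma~\ref{110}), not Theorem~\ref{1thmain2}; you flag this in your obstacle paragraph, but it is worth getting straight. The self-intersection step is then the trivial observation $\imath^!\imath_*\alpha=0^!_{N_0}\alpha=\alpha\cap e(N_0)$, and combining gives $\alpha\cap e(N_0)=[F]\virtloc\cap e(N_1)$, i.e.\ $\alpha=[F]\virtloc/e(N\virt)$. The independence of the resolution you worry about at the end is absorbed into the definition of $e(N\virt)$ and never needs separate verification in the proof.
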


\begin{rema} 
In \cite{GrPa},  the localization formula in Theorem \ref{2thmain} was proved for the ordinary virtual fundamental class under the following assumptions:
\begin{enumerate}
\item $X$ admits a global equivariant embedding into a smooth $Y$;
\item the perfect obstruction theory $E$ admits an equivariant global locally free resolution.
\end{enumerate} 
Both conditions are nontrivial unless $X$ is a projective scheme. Recent development in moduli theory and enumerative geometry utilizes a plethora of moduli stacks for which (1) is often tedious to verify and hence it is 
desirable to give a proof without the assumption (1). Here we remove the first assumption entirely and weaken
the second assumption to 
\begin{enumerate}
\item[($2'$)] the virtual normal bundle $N\virt$ admits a global locally free resolution $[N_0\to N_1]$ on the fixed locus $F$,
\end{enumerate}
which is often easier to check.
When $\sigma=0$, Theorem \ref{2thmain} says that the torus localization in \cite{GrPa} works without the assumption $(1)$ and with a much weaker $(2')$.\end{rema}

By our assumption that there is a resolution $[N_0\to N_1]$ of $N\virt$,  we find that the normal sheaf $N_{F/X}$ is contained in $h^1/h^0(N\virt[-1])=\ker\{ N_0\to N_1\}$, thus contained in $N_0$.
Hence the normal cone $\fC_{F/X}$ is contained in $N_0$ as well. As in Definition \ref{14}, we define the virtual pullback $$\imath^!:A_*(X(\sigma))\to A_*(F(\sigma))$$ for the inclusion $\imath:F\to X$, by
$$[B]\longmapsto [\fC_{B\times_XF/B}]\longmapsto 0^!_{N_0} [\fC_{B\times_XF/B}].$$
The proof of Theorem \ref{2thmain} is attained through the following two lemmas. 
\begin{lemm}\label{le2} 
Let $X(\sigma)$ and $F(\sigma)$ denote the vanishing loci of $\sigma$ and $\sigma_F$ respectively. Then $F(\sigma)=X(\sigma)\cap F$ and $\imath^![X]\virtloc=[F]\virtloc\cap e(N_1)$.
\end{lemm}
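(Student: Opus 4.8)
The plan is to prove the two assertions in order. The statement $F(\sigma)=X(\sigma)\cap F$ is essentially a diagram chase: by construction $\sigma_F$ is the composite $Ob_F\to Ob_X|_F\to \sO_F$, so its vanishing locus contains $X(\sigma)\cap F$; for the reverse inclusion one needs that $Ob_F\to Ob_X|_F$ is surjective, which follows from the distinguished triangle $E|_F\to E_F\to (N\virt)^\vee[1]$ after applying $H^1((-)^\vee)$, using that $N\virt=[N_0\to N_1]$ is perfect of amplitude $[0,1]$ so that $H^1(((N\virt)^\vee[1])^\vee)=H^2(N\virt[-1])$ has no obstruction contribution below the relevant degree; the argument here is the exact analogue of Lemma \ref{13}. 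So the first assertion reduces to the surjectivity of $Ob_F\to Ob_X|_F$ on $F$, which I would state and prove as a preliminary.

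For the identity $\imath^![X]\virtloc=[F]\virtloc\cap e(N_1)$, the approach is to run the relative/absolute cone comparison used in Manolache's functoriality (Theorem \ref{1thmain1}), but now in the non-equivariant setting on $F$, keeping careful track of the localized Gysin maps. Concretely: $\imath:F\to X$ is virtually smooth with relative obstruction theory $E_{F/X}=(N\virt)^\vee[1]$, i.e. $E_{F/X}^\vee=N\virt[-1]=[N_0\to N_1]$ placed so that $\fE_{F/X}=h^1/h^0(E_{F/X}^\vee)$ is the quotient bundle stack $[N_0/N_1]$. By the cone-reduction step of \cite{KiemLi} (exactly as invoked in the proof of Theorem \ref{1thmain1}) the support of $\fC_{F/X}$, and more precisely the cone entering the double-deformation-space comparison, lies in the kernel of the induced cosection on $\fE_{F/X}$, so the localized virtual pullback $\imath^!$ of Definition \ref{14}/\ref{1clvp} is defined and by (the cosection-localized analogue of) Manolache's formula we get $\imath^![X]\virtloc = 0^!_{\fE_{F/X},\loc}[\fC_{F/X}]$, where $0^!_{\fE_{F/X},\loc}$ is taken with respect to $\sigma_F$. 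The point is that $\fC_{F/X}\subset N_0$ and one factors the Gysin map for the bundle stack $[N_0/N_1]$ through the two vector bundles: $0^!_{[N_0/N_1],\loc} = 0^!_{N_1}\circ 0^!_{N_0,\loc}$ (more precisely the flat-pullback/intersection along $N_1\to F$). Applying $0^!_{N_0,\loc}$ to $[\fC_{F/X}]$ is by definition $[F]\virtloc$ (the obstruction theory on $F$ is $E_F$, and its normal cone sits in $N_0$ after the resolution, matching the construction of $[F]\virtloc$), and the remaining operation $0^!_{N_1}$ on a class pulled back from $F$ amounts to capping with $e(N_1)$. This is exactly where $e(N_1)$, rather than $e(N_0)$ or the full $e(N\virt)$, enters, and it is consistent with the statement.

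The main obstacle I anticipate is the compatibility of the cosection-localized Gysin maps under the factorization $0^!_{[N_0/N_1],\loc}=0^!_{N_1}\circ 0^!_{N_0,\loc}$: one must check that the cosection $\sigma_F$ on $\fE_{F/X}=[N_0/N_1]$ lifts compatibly to a cosection on $N_0$ (it does, since $N_0\twoheadrightarrow Ob_F$ and $\sigma_F:Ob_F\to\sO_F$), that the relevant cones lie in the kernels so that all localized Gysin maps in sight are defined on the right substacks, and that the localized Gysin map commutes with the flat pullback along $N_1\to F$ (the analogue of the commutativity of Gysin maps used repeatedly in the proof of Lemma \ref{110}). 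Granting these functoriality facts — all of which are either in \cite{KiemLi} or mild extensions proved by the same cone-reduction arguments already used above — the lemma follows; the $e(N_1)$ bookkeeping and the identification of $0^!_{N_0,\loc}[\fC_{F/X}]$ with $[F]\virtloc$ are then routine.
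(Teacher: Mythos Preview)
Your proposal has genuine gaps in both parts.

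\textbf{First identity.} The surjectivity of $Ob_F\to Ob_X|_F$ that you claim is \emph{false} in general. Since $E|_F=E_F\oplus (N\virt)^\vee$ is a direct sum decomposition into fixed and moving parts, one has $Ob_X|_F=h^1(E|_F^\vee)=Ob_F\oplus h^1(N\virt)$, and the map $Ob_F\to Ob_X|_F$ is the inclusion of the summand, not a surjection; the cokernel $h^1(N\virt)=\mathrm{coker}(N_0\to N_1)$ need not vanish. Equivalently, $E_{F/X}=(N\virt)^\vee[1]$ has amplitude $[-2,-1]$, so $\imath$ is \emph{not} virtually smooth in the sense of Definition \ref{11} with this relative theory, and the hypothesis driving Lemma \ref{13} does not hold. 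The correct reason $F(\sigma)=X(\sigma)\cap F$ is $T$-equivariance: $\sigma|_F:Ob_X|_F\to\sO_F$ is $T$-equivariant and $\sO_F$ has weight zero, hence $\sigma|_F$ kills the moving summand $h^1(N\virt)$ and factors through the projection to $Ob_F$; thus $\mathrm{im}(\sigma|_F)=\mathrm{im}(\sigma_F)$.

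\textbf{Second identity.} The displayed formula $\imath^![X]\virtloc=0^!_{\fE_{F/X},\loc}[\fC_{F/X}]$ is not a valid identity: the right side involves only the \emph{relative} cone $\fC_{F/X}$ and has lost all reference to the absolute obstruction theory that produces $[X]\virtloc$. Likewise the assertion $0^!_{N_0,\loc}[\fC_{F/X}]=[F]\virtloc$ is wrong: by definition $[F]\virtloc=0^!_{\fE_F,\loc}[\fC_F]$ uses the \emph{absolute} intrinsic normal cone $\fC_F$ and the bundle stack $\fE_F=h^1/h^0(E_F^\vee)$, neither of which is $\fC_{F/X}$ or $N_0$. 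Note also that the paper's $\imath^!$ is built from the vector bundle $N_0$, not from a bundle stack $[N_0/N_1]$; had one used the latter, Theorem \ref{1thmain1} would yield $[F]\virtloc$ with no $e(N_1)$ factor, so the appearance of $e(N_1)$ is exactly the discrepancy between the two choices, not a byproduct of ``factoring'' a single Gysin map.

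The paper's route is different and avoids these pitfalls. One applies Lemma \ref{110} directly with $\cN=N_0$: this gives
\[
\imath^![X]\virtloc=\imath^!0^!_{\fE,\loc}[\fC_X]=0^!_{\fE|_F\oplus N_0,\loc}[\fC_{F\times_\fE\fC_X/\fC_X}],
\]
and Vistoli's rational equivalence identifies the cone with $[\fC_F]$ (the proof of Theorem \ref{1thmain1} ensures the rational equivalence stays in the kernel locus so the localized Gysin map applies). Finally, from $\fE|_F=\fE_F\oplus[N_1/N_0]$ one has $\fE|_F\oplus N_0\cong\fE_F\oplus N_1$ as bundle stacks, and since $\fC_F\subset\fE_F$ sits in the zero section of the $N_1$ factor,
\[
0^!_{\fE|_F\oplus N_0,\loc}[\fC_F]=0^!_{\fE_F\oplus N_1,\loc}[\fC_F]=[F]\virtloc\cap e(N_1).
\]
The absolute cone $\fC_F$, not $\fC_{F/X}$, is what carries the day.
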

\begin{proof} The first identity follows from Lemma \ref{13}. We prove the second identity.
By definitions, $[X]\virtloc=0^!_{\fE,\loc}[\fC_X]$ and $[F]\virtloc=0^!_{\fE_F,\loc}[\fC_F]$, where 
$\fE=h^1/h^0(E^\vee)$ and $\fE_F=h^1/h^0(E_F^\vee)$. By Lemma \ref{110}, we have 
$$\imath^!0^!_{\fE,\loc}[\fC_X]=0^!_{\fE|_F\oplus N_0,\loc}[\fC_F]$$
because $[\fC_{F\times_\fE\fC_X/\fC_X}]=[\fC_{F/\fC_X}]=[\fC_F]$ by Vistoli's rational equivalence \cite{Vistoli}. 
The proof of Theorem \ref{1thmain1} guarantees that the rational equivalence lives in the desired locus for the localized Gysin maps. 
Therefore 
$$\imath^![X]\virtloc=\imath^!0^!_{\fE,\loc}[\fC_X]=0^!_{\fE|_F\oplus N_0,\loc}[\fC_F]
=0^!_{\fE_F\oplus N_1,\loc}[\fC_F]=[F]\virtloc\cap e(N_1),
$$
because $\fE|_F=\fE_F\oplus [N_1/N_0]$.
\end{proof}

\begin{lemm}\label{le3}
$\imath^!\imath_*\alpha =\alpha \cap e(N_0)$ for $\alpha\in A_*(F(\sigma))\otimes_\QQ\QQ[t,t^{-1}]$.
\end{lemm}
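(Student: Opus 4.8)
The statement $\imath^!\imath_*\alpha = \alpha\cap e(N_0)$ is the cosection-localized self-intersection formula for the inclusion $\imath:F\hookrightarrow X$. Since $F$ is the $T$-fixed locus, the normal cone $\fC_{F/X}$ sits inside the locally free sheaf $N_0$ (as noted right before Lemma \ref{le2}), so $\imath^!$ is literally the Gysin map $0^!_{N_0}$ applied to the specialization $[B]\mapsto[\fC_{B\times_XF/B}]$. The key geometric input is the standard fact (deformation to the normal cone, as in \cite{Vistoli} or \cite[Ch.\ 5--6]{Fulton}) that $\fC_{F/X}$ is in fact the \emph{bundle} $N_{F/X}$ on the nose when $F$ is cut out nicely enough by the $T$-action --- more precisely, the first-order neighborhood argument shows $\fC_{F/X}=N_{F/X}$ is locally free of the expected rank, and in the equivariant derived category $N_{F/X}$ is the moving part, hence $N_0$ in our chosen resolution (with $N_1$ the moving part of the obstruction sheaf). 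I would first record this, reducing $\imath^!$ on cycles supported over $F$ to the honest vector-bundle Gysin pullback $0^!_{N_0}$.

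**Main steps.** First I would reduce to $\alpha$ represented by an integral cycle $[V]$ with $V\subset F(\sigma)$, by linearity of both $\imath^!\imath_*$ and $\cap e(N_0)$, and by the fact that both sides are defined on Chow groups (compatibility of localized Gysin maps with rational equivalence, \cite[\S2]{KiemLi}). Second, I would compute $\imath^!\imath_*[V]$: by definition $\imath_*[V]$ is $[V]$ viewed in $A_*(X(\sigma))$, and $\imath^![V]$ is $0^!_{N_0}[\fC_{V\times_XF/V}]$; but $V\times_XF=V$ since $V\subset F$, so $\fC_{V\times_XF/V}=\fC_{V/V}=V$, the zero section over $V$. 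Hence $\imath^!\imath_*[V]=0^!_{N_0}[0_{N_0}(V)]$. Third, invoke the self-intersection formula for the vector bundle $N_0$ restricted to $V$: $0^!_{N_0}0_{N_0*}[V]=[V]\cap e(N_0|_V)=[V]\cap e(N_0)$, which is \cite[Ch.\ 6]{Fulton} in the non-equivariant case and extends to the $T$-equivariant Chow groups with coefficients in $\QQ[t,t^{-1}]$ verbatim. The only subtlety is that $\imath^!$ is a \emph{cosection-localized} operation, so I must check that the specialization $[\fC_{V/V}]=[0_{N_0}(V)]$ lies in the locus $N_0(\sigma_F)$ on which $0^!_{N_0,\loc}$ is defined, and that there it agrees with the ordinary Gysin map up to $\cap e$; but $V\subset F(\sigma)=\ker\sigma_F$, so the zero section over $V$ certainly lands in $N_0|_{F(\sigma)}\subset N_0(\sigma_F)$, and over $F(\sigma)$ the localized and ordinary Gysin maps coincide (this is exactly the mechanism already used in Lemma \ref{le2} and in the proof of Theorem \ref{1thmain1}).

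**The main obstacle.** The genuinely delicate point is not the formula but the bookkeeping of \emph{where the rational equivalences live}. One must ensure that all the deformation-to-the-normal-cone rational equivalences used to identify $\fC_{V/V}$ with the zero section, and to deduce the self-intersection formula for $N_0$, take place inside $N_0(\sigma_F)$ (resp.\ its analogue over the relevant total spaces), so that applying $0^!_{N_0,\loc}$ is legitimate and commutes with the identifications. Since everything here is supported over $F(\sigma)$ where the cosection vanishes identically, this is essentially automatic --- the localized Gysin map restricted to cycles supported over the zero locus of the cosection is just the ordinary one --- but it should be stated carefully, appealing to \cite[\S2]{KiemLi} and to the argument in the proof of Theorem \ref{1thmain1} exactly as Lemma \ref{le2} does. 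Once that is in place, combining the three steps above gives $\imath^!\imath_*\alpha=\alpha\cap e(N_0)$ for all $\alpha\in A_*(F(\sigma))\otimes_\QQ\QQ[t,t^{-1}]$.
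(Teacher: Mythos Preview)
Your core argument is correct and is exactly the paper's proof: for $V\subset F(\sigma)$ one has $V\times_XF=V$, hence $\fC_{V\times_XF/V}=V$ sitting as the zero section in $N_0$, and then $\imath^!\imath_*[V]=0^!_{N_0}[V]=[V]\cap e(N_0)$ by the ordinary self-intersection formula. Two small points of over-complication: first, the $\imath^!$ defined just before Lemma~\ref{le2} uses the \emph{ordinary} Gysin map $0^!_{N_0}$, not a localized one, so your entire ``main obstacle'' paragraph about rational equivalences living in $N_0(\sigma_F)$ is unnecessary; second, your claim that $\fC_{F/X}=N_{F/X}$ is a locally free bundle is neither asserted by the paper nor needed here---the paper only uses $\fC_{F/X}\subset N_0$, and for this lemma even that is irrelevant since $\fC_{V/V}=V$ trivially.
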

\begin{proof}
If $B$ is a cycle in $F(\sigma)$, the normal cone of $B\cap F(\sigma)$ in $B$ is $B$ and $\imath^!\imath_*B=0^!_{N_0}B=B\cap e(N_0)$ by the definition of virtual pullback $\imath^!$. 
\end{proof}

Now we can prove Theorem \ref{2thmain}.

\begin{proof}[Proof of Theorem \ref{2thmain}]
By \cite[Theorem 6.3.5]{Kres}, 
$$\imath_*:A^T_*(F(\sigma))\otimes_{\QQ[t]}\QQ[t,t^{-1}] \lra A^T_*(X(\sigma))\otimes_{\QQ[t]}\QQ[t,t^{-1}] 
$$ 
is an isomorphism. Thus
$\imath_*\alpha=[X]\virtloc\quad$ for some 
$$\alpha\in A^T_*(F(\sigma))\otimes_{\QQ[t]}\QQ[t,t^{-1}]= A_*(F(\sigma))\otimes_\QQ \QQ[t,t^{-1}].
$$ By Lemmas \ref{le2} and \ref{le3}, 
$$[F]\virtloc \cap e(N_1)= \imath^![X]\virtloc =\imath^!\imath_*\alpha=\alpha\cap e(N_0).$$ 
Hence $$\alpha=\frac{[F]\virtloc}{e(N\virt)}\and [X]\virtloc=\imath_*\alpha=\imath_*\frac{[F]\virtloc}{e(N\virt)}$$ as desired.
\end{proof}

\begin{exam} Let $V=\CC^d$ be a vector space with $(z_1,\cdots, z_d)$ be its standard
coordinates. We let $T=\CC^*$ acts on $V$ via $(z_i)^\alpha=(\alpha z_i)$.
The global differentials $dz_i$ give a trivialization of $\Omega_V$, in the form $\Omega_V\cong
V\times V\sta$. Let $E=[T_V\mapright{0} \Omega_V]$ and 
$\sigma:\Omega_V=V\times V^*\to \sO_V$ be the tautological pairing. 
Then $\sigma^{-1}(0)=\{O\}\subset V$ is the (reduced) origin $O\in V$, and under the induced $T$-action
on $\Omega_V$, $\sigma$ is $T$-invariant. We observe
$$F=V^{\CC^*}=\{O\}, \quad E|_F=[V\mapright{0} V^*]=N\virt,\and E_F=[0\to 0].$$
Hence $e(N\virt)=(-1)^d$ and $[F]\virtloc$ is the zero cycle $[O]$ consisting of one simple point $O$. Then by
Theorem \ref{2thmain}, we have
$$[V]\virtloc=\frac{[F]\virtloc}{e(N\virt)}=(-1)^d [O]$$
as expected from \cite[Example 2.4]{KiemLi}.
\end{exam}

In this example, if instead we consider a cosection $\sigma': \Omega_V\to\sO_V$
via $dz_1\mapsto 1$ and $dz_{i>1}\mapsto 0$. Since $\sigma'$ is surjective, we obtain
$[V]\virtloc=0$. However, $\frac{[F]\virtloc}{e(N\virt)}=(-1)^d [O]$ as before. 
Hence  Theorem \ref{2thmain} does not apply.

\medskip

\section{Wall crossing formulas for cosection localized virtual cycles}\label{S:Wall}

In this section we provide a wall crossing formula for simple $\CC^*$-wall crossings. The construction and proof are rather standard (cf. \cite{KL3}).

Let $X$ be a Deligne-Mumford stack acted on by $T=\CC^*$. Let $\phi:E\to \bbL_X$ be a $T$-equivariant perfect obstruction theory, together with an equivariant cosection $\sigma:Ob_X=h^1(E^\vee)\to \sO_X$. 
Let \begin{enumerate}
\item $F$ be the $T$-fixed locus in $X$;
\item $X^s$ be the open substack of $x\in X$ so that the orbit $T\cdot x$ is 1-dimensional and closed in $X$;
\item $\Sigma^0_\pm=\{x\in X-(X^s\cup F)\,|\, \mathrm{lim}_{t\to 0}t^{\pm 1}\cdot x\in F\}$;
\item $\Sigma_\pm = \Sigma^0_\pm \cup F$;
\item $X_\pm = X-\Sigma_\mp \subset X$;
\item $M_\pm=[X_\pm/T]\subset M=[X/T]$ are separated Deligne-Mumford stacks.
\end{enumerate}
Recall from \S\ref{S:Torus} that we have the induced cosections $\sigma_F:Ob_F\to \sO_F$.
We then define the master space of the wall crossing $M_\pm$ to be
$$\fM=\left[ X\times \PP^1 - \Sigma_-\times\{0\} -\Sigma_+\times\{\infty\}/\CC^*\right]$$
where $\CC^*$ acts trivially on $X$ and by $\lambda\cdot(a:b)=(a:\lambda b)$ on $\PP^1$. The action of $T$ on $X$ induces an action of $T$ on $\fM$ whose fixed locus is
$$M_+\sqcup F\sqcup M_-$$
as is easy to check.
Since $\CC^*$ acts only on the component $\PP^1$, the pullback of any sheaf on $X$ by the projection $X\times \PP^1\to X$ is $\CC^*$-equivariant and hence descends to the free quotient $\fM$. By pulling back the perfect obstruction theory $\phi:E\to\bbL_X$ and descending, we obtain a morphism $\bar\phi:\bar E\to \bbL_\fM$. 
\begin{lemm}
The morphism $\bar\phi:\bar E\to \bbL_\fM$ is a $T$-equivariant perfect obstruction theory of $\fM$. Moreover the pullback of $\sigma$ descends to a $T$-equivariant cosection $\bar\sigma:Ob_\fM\to \sO_\fM$.
\end{lemm}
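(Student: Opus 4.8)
The plan is to obtain both assertions by $\CC^*$-equivariant descent from the defining presentation of $\fM$. Write $\fU:=X\times\PP^1-\Sigma_-\times\{0\}-\Sigma_+\times\{\infty\}$, so that $\fM=[\fU/\CC^*]$, and let $\rho\colon\fU\to\fM$ be the quotient map. Then $\rho$ is a $\CC^*$-torsor, hence smooth, surjective, faithfully flat and $T$-equivariant for the residual $T$-action on $\fM$; and since $\CC^*$ acts trivially on $X$, the first projection $q\colon\fU\to X$ is $\CC^*$-equivariant with trivial action on its target. Consequently the $q$-pullback of any complex, morphism or sheaf coming from $X$ carries a canonical $\CC^*$-equivariant structure, i.e.\ descends along $\rho$ to an object on $\fM$; every object occurring in the lemma is such a descent, so it suffices to verify each claim on the smooth cover $\fU$.

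First I would build everything upstairs. The map $q$ is smooth, being an open substack of the $\PP^1$-bundle $X\times\PP^1\to X$, and the second projection $\fU\to\PP^1$ is the restriction to the open $\fU$ of the base change of $X\to\Spec\CC$ along $\PP^1\to\Spec\CC$; base change for cotangent complexes therefore gives $q^*\bbL_X\cong\bbL_{\fU/\PP^1}$, and since $q$ is flat the defining conditions of a perfect obstruction theory are preserved, so $q^*\phi\colon q^*E\to\bbL_{\fU/\PP^1}$ is a $(T\times\CC^*)$-equivariant relative perfect obstruction theory for $\fU\to\PP^1$, with relative obstruction sheaf $q^*Ob_X=h^1((q^*E)^\vee)$, of which $q^*\sigma\colon q^*Ob_X\to\sO_\fU$ is a $(T\times\CC^*)$-equivariant cosection.

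Next I would descend. The square with horizontal arrows $\fU\to\PP^1$ and $\fM\to[\PP^1/\CC^*]$ and vertical $\CC^*$-torsors $\rho$ and $\PP^1\to[\PP^1/\CC^*]$ is Cartesian, so $\rho^*\bbL_{\fM/[\PP^1/\CC^*]}\cong\bbL_{\fU/\PP^1}$; meanwhile $q^*E$, $q^*\phi$ and $q^*\sigma$ descend to $\bar E$, to $\bar\phi\colon\bar E\to\bbL_{\fM/[\PP^1/\CC^*]}$, and to $\bar\sigma\colon Ob_\fM\to\sO_\fM$ with $Ob_\fM=h^1(\bar E^\vee)$. The conditions defining a relative perfect obstruction theory---$\bar E$ of amplitude $[-1,0]$, $h^0(\bar\phi)$ an isomorphism, $h^{-1}(\bar\phi)$ surjective---and the cosection property of $\bar\sigma$ are all local for the smooth topology on $\fM$ and have just been checked on $\fU$; hence $\bar\phi$ is a relative perfect obstruction theory of $\fM$ over $[\PP^1/\CC^*]$ with cosection $\bar\sigma$. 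Because $[\PP^1/\CC^*]$ is a smooth Artin stack of dimension $0$, this is exactly the perfect obstruction theory $\bar\phi\colon\bar E\to\bbL_\fM$ of the statement, of the same virtual rank as that of $X$; and since $q^*\phi$, $q^*\sigma$ and all the identifications above are $T$-equivariant, $\bar\phi$ and $\bar\sigma$ live in $D([\fM/T])$.

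The one point I expect to need genuine care---the ``hard part'' of an otherwise formal descent---is the cotangent-complex bookkeeping forced by the fact that $\fM$ is an Artin, not Deligne--Mumford, stack: it carries $B\CC^*$-gerbes over the substacks lying above $0$ and $\infty$, so ``$\bbL_\fM$'' in the statement is best understood relative to the smooth $0$-dimensional base $[\PP^1/\CC^*]$ that appears when $\bbL_X$ is descended, and one must check that passing between this relative cotangent complex and the absolute one leaves the amplitude $[-1,0]$ of $\bar E$ and the virtual rank unchanged---which is precisely where the smoothness and vanishing dimension of $[\PP^1/\CC^*]$ enter. Granting that, every axiom of a $T$-equivariant perfect obstruction theory, and the cosection property, reduce by faithfully flat descent along $\rho$ to the hypotheses that $\phi$ is a $T$-equivariant perfect obstruction theory and $\sigma$ a $T$-equivariant cosection on $X$.
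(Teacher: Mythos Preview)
The paper declares the lemma straightforward and omits the proof, so there is no argument to compare against; your descent-along-the-quotient approach is precisely what the authors sketch in the sentence preceding the lemma, and your execution is correct for the setup as literally stated.

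One remark on your final paragraph. The $B\CC^*$-gerbes you rightly worry about arise because you follow the paper's assertion that the quotienting $\CC^*$ acts trivially on $X$. Under that reading, though, the paper's own claims become problematic: the quotient is not ``free'' as asserted, and the $T$-fixed locus of $\fM$ would be only $F$ (the induced $T$-action on the gerby loci $X_\pm\times B\CC^*$ has no fixed points since $X_\pm\cap F=\emptyset$), not $M_+\sqcup F\sqcup M_-$. The standard master-space construction---and the one that makes the wall-crossing theorem go through---has $\CC^*$ acting \emph{diagonally}, via the given $T$-action on $X$ together with the scaling on $\PP^1$; the residual $T$ may then equivalently be taken to act on the $\PP^1$-factor alone. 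With that reading $\fM$ is genuinely Deligne--Mumford (over $0$ and $\infty$ the fibres are $[X_\pm/T]=M_\pm$, and $X_\pm$ contains no $T$-fixed points), your cotangent-complex caveat becomes unnecessary, and your descent argument carries over unchanged except that the $\CC^*$-equivariance of $q^*E$, $q^*\phi$, $q^*\sigma$ now follows from the hypothesised $T$-equivariance of $E$, $\phi$, $\sigma$ rather than from triviality of the action. In short, your proof is fine; the wrinkle you detected reflects an imprecision in the paper's description of the master space rather than a gap in your argument.
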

\begin{proof} This is straightforward and we omit the proof.\end{proof}

The cosection $\bar\sigma$ induces cosections on the fixed locus $M_\pm$ and $F$ in $\fM$.
We are ready to state the main result of this section.
\begin{theo}\label{3thmain}
Let the notation be as above. 
Suppose the virtual normal bundle $N\virt$ admits a resolution $[N_0\to N_1]$ by vector bundles on $F$.
Then we have
$$[M_+]\virtloc- [M_-]\virtloc= \mathrm{res}_{t=0} \frac{[F]\virtloc}{e(N\virt)}\quad\text{in } A^T_*(\fM)\otimes_{\QQ[t]}\QQ[t,t^{-1}].$$
\end{theo}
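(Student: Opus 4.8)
The plan is to deduce the wall crossing formula by applying the torus localization formula (Theorem~\ref{2thmain}) to the master space $\fM$ with its residual $T$-action, the perfect obstruction theory $\bar\phi:\bar E\to\bbL_\fM$, and the cosection $\bar\sigma:Ob_\fM\to\sO_\fM$ constructed above; these determine a cosection localized virtual cycle $[\fM]\virtloc\in A^T_*(\fM(\bar\sigma))$. Since $\fM^T=M_+\sqcup F\sqcup M_-$, Theorem~\ref{2thmain} yields an identity
$$[\fM]\virtloc=(\imath_+)_*\frac{[M_+]\virtloc}{e(N\virt_{M_+/\fM})}+(\imath_F)_*\frac{[F]\virtloc}{e(N\virt_{F/\fM})}+(\imath_-)_*\frac{[M_-]\virtloc}{e(N\virt_{M_-/\fM})}$$
in $A^T_*(\fM)\otimes_{\QQ[t]}\QQ[t,t^{-1}]$, where $\imath_\pm:M_\pm\to\fM$ and $\imath_F:F\to\fM$ are the inclusions. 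To invoke Theorem~\ref{2thmain} one has to check that the virtual normal bundles of the three fixed components admit global locally free resolutions, and then identify the three summands; this is done by analyzing the local structure of $\fM$ along its fixed loci.

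Along the copy of $F$ in $\fM$, the master space construction furnishes a $T$-equivariant open embedding of a neighborhood of $F$ in $X$ into $\fM$ (this is the ``middle'' of the $\PP^1$-direction, where the quotienting $\CC^*$ acts freely), compatibly with the perfect obstruction theory and the cosection. Hence $\bar\phi$ and $\bar\sigma$ restrict on $F$ to the data $E_F$ and $\sigma_F$ of \S\ref{S:Torus}, one has $N\virt_{F/\fM}\cong N\virt$, and the resolution $[N_0\to N_1]$ assumed in the theorem serves as the required global resolution; so the $F$-contribution is $[F]\virtloc/e(N\virt)$. Along $M_\pm$, the master space is, locally, a product $M_\pm\times\bbA^1$ on which the residual $T$ acts trivially on the first factor and with weight $\mp1$ (resp. $\pm1$) on $\bbA^1$; hence $\bar E|_{M_\pm}$ has fixed part the perfect obstruction theory of $M_\pm=[X_\pm/T]$ inherited from $X$, the bundle $N\virt_{M_\pm/\fM}$ is the line bundle of $T$-weight $\mp t$ (which is its own resolution), and $\bar\sigma$ restricts to the inherited cosection; so the $M_\pm$-contribution is $[M_\pm]\virtloc/(\mp t)$, with $[M_\pm]\virtloc$ the intrinsic cosection localized virtual cycle of $M_\pm$. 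With these identifications the displayed identity becomes
$$[\fM]\virtloc=-(\imath_+)_*\frac{[M_+]\virtloc}{t}+(\imath_F)_*\frac{[F]\virtloc}{e(N\virt)}+(\imath_-)_*\frac{[M_-]\virtloc}{t}.$$

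Finally I would take $\res_{t=0}$ of both sides. The left side is regular at $t=0$, being the image of a class in the $\QQ[t]$-module $A^T_*(\fM(\bar\sigma))$, so its residue vanishes; on the right, $\res_{t=0}\bigl(\mp[M_\pm]\virtloc/t\bigr)=\mp[M_\pm]\virtloc$ since $[M_\pm]\virtloc$ is $t$-independent, while the $F$-term is left unchanged. Rearranging gives $[M_+]\virtloc-[M_-]\virtloc=\res_{t=0}\bigl([F]\virtloc/e(N\virt)\bigr)$ in $A^T_*(\fM)\otimes_{\QQ[t]}\QQ[t,t^{-1}]$, which is the assertion.

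I expect the main obstacle to be the local analysis behind the second paragraph: tracking the obstruction theory and the cosection through the master space construction to show that $\fM$ along $F$ looks like $X$ along $F$ (so that $N\virt_{F/\fM}\cong N\virt$ and the induced data are $E_F,\sigma_F$) and that $\fM$ along $M_\pm$ is a product with the residual $T$ acting with weight $\mp1$ on the normal line, which pins down the signs in the final formula. One must also confirm that $\fM$ is a separated Deligne-Mumford stack with a stratification by quotient stacks, so that Theorem~\ref{2thmain} and the Kresch intersection theory it rests on genuinely apply, and --- exactly as in the proof of Theorem~\ref{1thmain1} --- that the cycles and rational equivalences entering the argument remain in the loci required by the localized Gysin maps; these points are routine but must be verified.
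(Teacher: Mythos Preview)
Your proposal is correct and follows essentially the same route as the paper: apply Theorem~\ref{2thmain} to the master space $\fM$, identify the three fixed components and their virtual normal bundles (a weight~$\mp1$ line along $M_\pm$ and $N\virt$ along $F$), and take $\res_{t=0}$ of the resulting localization identity. The paper's proof is terser---it simply asserts the normal weights and takes the residue---while you spell out the local analysis and the hypotheses to be checked, but the argument is the same.
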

\begin{proof}
Applying Theorem \ref{2thmain} to the master space $\fM$, we find that 
$$[\fM]\virtloc = \frac{[M_+]\virtloc}{-t} +\frac{[M_-]\virtloc}{t} + \frac{[F]\virtloc}{e(N\virt)}$$
since the normal bundle of $M_+$ is trivial with weight $1$ while that of $M_-$ is trivial with weight $-1$ by construction. 
If we take $\mathrm{res}_{t=0}$, the left side vanishes because $[\fM]\virtloc\in A_*^T(\fM)$ has trivial principal part. Therefore the residue of the right side 
$$-[M_+]\virtloc +[M_-]\virtloc + \mathrm{res}_{t=0} \frac{[F]\virtloc}{e(N\virt)}$$
vanishes. This proves the theorem.
\end{proof}



\bibliographystyle{amsplain}

\end{document}